\newcommand{\Fqn}{\mathbb{F}_{q^n}}
\newcommand{\Fq}{\mathbb{F}_q}
\newcommand{\Mod}[1]{\ \mathrm{mod}\ #1}
\theoremstyle{dgdef}
\newtheorem{Theorem}{Theorem}[section]\newtheorem{Remark}{Remark}[section]\newtheorem{Lemma}{Lemma}[section]
\newtheorem{Corollary}{Corollary}[section]
\newtheorem{Definition}{Definition}[section]
\begin{document}

  \articletype{Research Article}
  \author[1]{Gary McGuire}
  \author[2]{Daniela Mueller}
  \affil[1]{School of Mathematics and Statistics, University College Dublin, Ireland}
  \affil[2]{School of Mathematics and Statistics, University College Dublin, Ireland}
  \title{Some Results on Linearized Trinomials that Split Completely}
  \runningtitle{...}
  \abstract{Linearized polynomials over finite fields have been much studied over the last several decades.
Recently there has been a renewed interest in linearized polynomials because of new connections to coding theory and finite geometry.
We consider the problem of calculating the rank or nullity of a linearized polynomial $L(x)=\sum_{i=0}^{d}a_i x^{q^i}$ (where $a_i\in \mathbb{F}_{q^n}$) from the coefficients $a_i$.
The rank and nullity of $L(x)$ are the rank and nullity of the associated $\mathbb{F}_q$-linear map $\mathbb{F}_{q^n} \longrightarrow \mathbb{F}_{q^n}$. McGuire and Sheekey \cite{MCGUIRE201968} defined a $d\times d$ matrix $A_L$ with the property that $$\mbox{nullity} (L)=\mbox{nullity} (A_L -I).$$ We present some consequences of this result for some trinomials that split completely, i.e., trinomials $L(x)=x^{q^d}-bx^q-ax$ that have nullity $d$. We give a full characterization of these trinomials for $n\le d^2-d+1$.}
  \keywords{Linearized Polynomials, Finite Field, ECDLP, elliptic curves, cryptography}
  \received{...}
  \accepted{...}
  \journalname{...}
  \journalyear{...}
  \journalvolume{..}
  \journalissue{..}
  \startpage{1}
  \aop
  \DOI{...}

\maketitle

\section{Introduction}

Let $\Fqn$ be the finite field with $q^n$ elements, where $q$ is a prime power. 
Let $$L(x)=a_0x+a_1x^q+a_2 x^{q^2}+\cdots+a_d x^{q^d}$$
be a $q$-linearized polynomial with coefficients in $\Fqn$. The roots of $L(x)$ that lie in the field $\Fqn$ form an $\Fq$-vector space, which can have dimension anywhere between 0 and $d$.

The dimension of the space of roots of $L$ that lie in  $\Fqn$ is equal to the nullity of $L$ considered as an $\Fq$-linear map from  $\Fqn$ to  $\Fqn$. McGuire and Sheekey \cite{MCGUIRE201968} defined a $d\times d$ matrix $A_L$ with the property that $$\mbox{nullity} (L)=\mbox{nullity} (A_L -I_d).$$  The entries of $A_L$ can be computed directly from the coefficients of $L$.

In this paper we focus on the case of largest possible nullity, i.e., the case that $L(x)$ has all its roots in $\Fqn$. In this case, $\mbox{nullity}(L)=d$, and so $A_L-I_d$ has rank 0 and is therefore the zero matrix.
Thus we will be studying when $A_L=I_d$. This case of largest possible nullity was also obtained in  \cite{CMPZu}.

We also restrict to trinomials. When computing the rank or nullity, we may assume without loss of generality that $L(x)$ is monic. We will study polynomials of the form $$L(x)=x^{q^d}-bx^q-ax\in\mathbb{F}_{q^n}[x]$$ where $q$ is a prime power and $n\geq1$. We want to find $a,b\in\mathbb{F}_{q^n}$ such that $L$ splits completely over $\mathbb{F}_{q^n}$, i.e., $L$ has $q^d$ roots in $\mathbb{F}_{q^n}$. Thus, the problem becomes finding $a,b\in\mathbb{F}_{q^n}$ such that $A_L=I_d$. We will provide a full characterization of this situation for $n\le d(d-1)+1$. Our  results are summarized and stated in the following theorem. 

\begin{Theorem}\label{all}
\begin{enumerate}
\item If $n\leq(d-1)d$ and $d$ does not divide $n$, then there is no polynomial $L=x^{q^d}-bx^q-ax$ with $a,b\in\mathbb{F}_{q^n}$ that splits completely over $\mathbb{F}_{q^n}$. \item Let $n=id$ with $i\in\{1,\dots,d-1\}$. Let $L=x^{q^d}-bx^q-ax\in\mathbb{F}_{q^n}[x]$. Then $L$ has $q^d$ roots in $\mathbb{F}_{q^n}$ if and only if $a^{1+q^d+\dots+q^{(i-1)d}}=1$ and $b=0$.
\item Let $n=(d-1)d+1$. Let $L=x^{q^d}-bx^q-ax\in\mathbb{F}_{q^n}[x]$. Then $L$ has $q^d$ roots in $\mathbb{F}_{q^n}$ if and only if all the following hold:

\hskip1in $\bullet$ $N(a)=(-1)^{d-1}$

\hskip1in $\bullet$ $b=-a^{qe_1}$ where $e_1=\sum_{i=0}^{d-1} q^{id}$

\hskip1in $\bullet$   $d-1$ is a power of the characteristic of $\mathbb{F}_{q^n}$\\
where $N(a)=a^{1+q+\dots+q^{(d-1)d}}=a^{(q^n-1)/(q-1)}$.
\end{enumerate}
\end{Theorem}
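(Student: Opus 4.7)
The strategy is to apply the criterion $A_L = I_d$ provided by McGuire and Sheekey, making the matrix equation explicit for our trinomial. Working in $R = \mathbb{F}_{q^n}[x]/(L(x))$, the defining relation $x^{q^d} \equiv ax + bx^q$ together with repeated application of the Frobenius lets one reduce any $x^{q^{d+k}}$ to an $\mathbb{F}_{q^n}$-linear combination of $x, x^q, \ldots, x^{q^{d-1}}$. Each reduction step from $x^{q^{d+k}}$ produces either a factor of $a^{q^k}$ (decreasing the exponent by $d$) or a factor of $b^{q^k}$ (decreasing it by $d-1$). Iterating, every entry of $A_L$ becomes a sum, indexed by certain lattice paths, of $q$-monomials in $a$ and $b$. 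The condition $A_L = I_d$ is then a finite system of polynomial equations in $a, b$, equivalent to $x^{q^n} \equiv x$ in $R$.

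For Parts~(1) and~(2), the key observation is a \emph{weight} argument: every off-diagonal monomial summand in $A_L$ carries at least one $b$-factor, so setting the off-diagonal entries to zero yields equations in which every term is divisible by some $b^{q^i}$. A careful count of $q$-degrees should show that for $n \leq d(d-1)$ the only solution is $b = 0$. Once $b = 0$, the recurrence collapses to $x^{q^{jd}} \equiv a^{1 + q^d + \cdots + q^{(j-1)d}}\,x$, and $x^{q^n} \equiv x$ forces both $d \mid n$ (say $n = id$) and the norm-like condition $a^{1+q^d+\cdots+q^{(i-1)d}} = 1$. This gives Parts~(1) and~(2) simultaneously.

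For Part~(3), when $n = d(d-1) + 1$, the extra unit of exponent just suffices to produce non-trivial $b$-contributions and the system no longer forces $b = 0$. I would compute a few strategic entries of $A_L$ and tease out three independent conditions: (i)~one diagonal equation should simplify to $N(a) = (-1)^{d-1}$; (ii)~an off-diagonal entry should determine $b$ uniquely as $b = -a^{qe_1}$ with $e_1 = \sum_{i=0}^{d-1} q^{id}$; and (iii)~substituting (i) and (ii) into the remaining equations should leave an obstruction of the shape $\sum_{k=0}^{d-1} \binom{d-1}{k} M_k = 0$, where the $M_k$ are distinct nonzero $q$-monomials in $a$. Independence of the $M_k$ then forces $\binom{d-1}{k} \equiv 0 \pmod p$ for all $0 < k < d-1$, which by Lucas' theorem is equivalent to $d - 1$ being a power of the characteristic $p$ of $\mathbb{F}_{q^n}$.

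The main obstacle is step~(iii). One must accurately expand the iterated recurrence to identify exactly which $q$-monomials appear in each entry of $A_L$ and with what integer coefficients, then separate the binomial coefficients $\binom{d-1}{k}$ from the other factors that were forced to vanish by~(i) and~(ii). The bookkeeping is substantial, but the lattice-path interpretation of the reduction steps should make it tractable. A secondary difficulty in Part~(1) is to verify that the weight argument is truly tight in the entire range $n \leq d(d-1)$ with $d$ not dividing $n$, i.e.\ that no cancellation among the $b$-factors can admit a spurious nonzero $b$.
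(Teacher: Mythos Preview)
Your overall framework---reducing $x^{q^n}$ modulo $L$ via the relation $x^{q^d}\equiv ax+bx^q$ and interpreting the resulting coefficients as sums over lattice paths with steps $-d$ (carrying a factor $a^{q^\bullet}$) and $-(d-1)$ (carrying $b^{q^\bullet}$)---is exactly equivalent to the paper's recursion $M_{l,k}=M_{l,k-d}a^{q^{k-1}}+M_{l,k-d+1}b^{q^{k-1}}$ for the entries of $A_L$. So the scaffolding is right, and your outline of Part~(3) is essentially the paper's argument (one small correction: the binomial obstruction does not appear as a single sum $\sum_k\binom{d-1}{k}M_k=0$ requiring an independence argument; the equations coming from rows $l=3,\dots,d$ of $A_L$ are already decoupled, each of the form $\binom{d-1}{l-2}\,c_{d-1,l-2}=0$ with $c_{d-1,l-2}$ a nonzero power of $a$).

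The genuine gap is in the weight argument for Parts~(1) and~(2). The claim that ``every off-diagonal monomial summand in $A_L$ carries at least one $b$-factor'' is false; the situation is exactly the reverse. A pure-$a$ lattice path from $n$ down to a residue $j\in\{0,\dots,d-1\}$ exists iff $d\mid(n-j)$, so when $d\nmid n$ the pure $a$-monomial lands in the column $j\equiv n\pmod d$, which is \emph{off}-diagonal, while it is the diagonal entry whose every monomial carries a $b$-factor. Hence the off-diagonal equations cannot force $b=0$; if anything, the diagonal equation $c_0=1$ forces $b\neq 0$. What the paper actually does is prove a combinatorial vanishing claim (for $i=0,\dots,d-3$ one has $M_j=0$ throughout the range $id+2\le j\le (i+1)d-(i+1)$), which for $n\le d(d-1)$ with $d\nmid n$ makes one specific off-diagonal entry equal to \emph{exactly} $a^{1+q^d+\cdots+q^{id}}$ with no further terms. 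Setting that entry to zero gives $a=0$, whence every $M_k$ vanishes and the $(1,1)$ entry cannot be $1$. Your route via $b=0$ is the right endpoint for Part~(2), but there too it is obtained from an explicit computation of a particular entry (the paper shows $M_{1,i(d-1)+1}=a\,b^{q^{d-1}+q^{2(d-1)}+\cdots+q^{i(d-1)}}$), not from a generic divisibility-by-$b$ observation. In short, the missing ingredient in your plan is the vanishing pattern of the $M_j$ in the range $n\le d(d-1)$; without it the weight argument does not close.
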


We will prove part 1 in Section \ref{nless}, part 2 in Section \ref{ddivn} and part 3 in Sections \ref{nexact} and \ref{converse}. In Section \ref{crypt} we present a possible application to elliptic curve cryptography.
 
Our result generalizes a a result of Csajbok et al \cite{CMPZh} which states that $a_0x+a_1x^q+a_3x^{q^3}$ (where $a_i\in \mathbb{F}_{q^7}$) cannot have $q^3$ roots in $ \mathbb{F}_{q^7}$ if $q$ is odd. This is the $d=3$ case of our theorem. Also in that paper, the authors give one example of a trinomial that does split completely when $d=3$, $n=7$, and $q=2$. Our theorem characterizes fully the trinomials that split completely and allows us to count their number (for each nonzero $a$ of norm 1 there is one polynomial, so there are $\frac{q^n-1}{q-1}$ such trinomials).

One can trivially obtain some results by taking $q$-th powers. For example, when $n=2d-2$, the trinomial $x^{q^d}-bx^q-ax$ cannot have $q^d$ roots in $\Fqn$. This follows by taking the $q^{d-2}$ power of the trinomial. Our theorem extends this to a larger range of values of $n$. 
 
One recent application of calculating the rank of linearized polynomials concerns rank metric codes
and MRD codes, see \cite{Sheekey}. 
In particular, we would obtain an $\Fqn$-linear MRD code from a space of linearized polynomials of dimension $kn$ over $\Fq$, with the property that every nonzero element has rank at least $n-k+1$. For example, in the case $k=3$, we would obtain an MRD code from the set of all trinomials $cx^{q^d}-bx^q-ax$ ($a,b,c\in\mathbb{F}_{q^n}$) if all of them have nullity 0 or 1 or 2.
 
Finally, we set the scene for our results. We are seeking $n\geq1$ and $a,b\in\mathbb{F}_{q^n}$ such that $L=x^{q^d}-bx^q-ax$ splits over $\mathbb{F}_{q^n}$. The companion matrix $C_L$ of $L(x)=x^{q^d}-bx^q-ax$ as defined in \cite{MCGUIRE201968} is the $d\times d$ matrix
$$
\begin{pmatrix}
 0 & 0 & \ldots & 0 & a\\
 1 & 0 & \ldots & 0 & b\\
 0 & 1 & \ldots & 0 & 0\\
 \vdots & \vdots & \ddots & \vdots & \vdots\\
 0 & 0 & \ldots & 1 & 0\\
\end{pmatrix}.
$$
We define $A_L=A_{L,n}=C_LC_L^q\cdots C_L^{q^{n-1}}$, where $C^q$ means raising every matrix entry to the power of $q$. As stated above, $L$ splits completely over $\mathbb{F}_{q^n}$ if and only if $A_L=I_d$.

\section{Fixed $d$ not dividing $n$ and $n\leq(d-1)d$}\label{nless}

In this section we will prove the first part of Theorem \ref{all}.

\begin{Theorem}\label{small_n}
If $n\leq(d-1)d$ and $d$ does not divide $n$, then there is no polynomial $L=x^{q^d}-bx^q-ax$ with $a,b\in\mathbb{F}_{q^n}$ that splits completely over $\mathbb{F}_{q^n}$.
\end{Theorem}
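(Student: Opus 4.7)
The plan is to exploit the criterion recalled in the introduction that $L$ splits completely over $\mathbb{F}_{q^n}$ if and only if $A_L = I_d$, equivalently the relation $\sigma^n \equiv 1 \Mod{L}$ in the twisted polynomial ring $R = \mathbb{F}_{q^n}[\sigma;\mathrm{Frob}_q]$ (with $\sigma c = c^q\sigma$). I will reduce $\sigma^n$ modulo $L = \sigma^d - b\sigma - a$ explicitly and exhibit a single coefficient in the reduced expression that must vanish but is visibly a nonzero monomial in $b$. This forces $b = 0$, and then the factorization $L = x(x^{q^d - 1} - a)$ together with $d \nmid n$ closes the argument.

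First, I introduce the coefficients $\lambda_{k,i} \in \mathbb{F}_{q^n}$ by $\sigma^k \equiv \sum_{i=0}^{d-1} \lambda_{k,i}\sigma^i \Mod{L}$, with shorthand $\alpha_k := \lambda_{k,0}$ and $\delta_k := \lambda_{k,d-1}$. A short computation with $\sigma^d \equiv a + b\sigma$ and the twist $\sigma c = c^q\sigma$ yields the recursion
\[
\lambda_{k+1,0} = a\,\delta_k^q,\qquad \lambda_{k+1,1} = \alpha_k^q + b\,\delta_k^q,\qquad \lambda_{k+1,i} = \lambda_{k,i-1}^q \quad (i \ge 2).
\]
Unrolling the last relation down to $\lambda_{n-i+1,1}$ and substituting the second gives the closed form
\[
\lambda_{n,i} \;=\; \alpha_{n-i}^{q^i} + b^{q^{i-1}}\,\delta_{n-i}^{q^i} \qquad (1 \le i \le d-1,\ n \ge i),
\]
and the condition $\sigma^n \equiv 1$ becomes $\alpha_n = 1$ together with $\lambda_{n,i} = 0$ for $i = 1, \ldots, d-1$.

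The technical core of the proof, and the step I expect to be the main obstacle, is to establish by simultaneous induction on $s$, for $1 \le s \le d - 1$, the two claims
\begin{enumerate}
\item $\alpha_{(s-1)(d-1)} = 0$, which rests on the observation that $\alpha_k$ vanishes outside the ``support blocks'' $[r(d-1)+1,\, rd]$, $r \ge 1$, a property propagated through the gaps $[rd+1,\, (r+1)(d-1)]$ by the recursion, and the index $(s-1)(d-1)$ falls into such a gap as long as $s-1 < d$;
\item $\delta_{s(d-1)} = b^{E_s}$, where $E_s := \sum_{j=1}^{s-1} q^{j(d-1)-1}$.
\end{enumerate}
The base $s = 1$ of (ii) gives $\delta_{d-1} = 1$ by iterating $\lambda_{k,i} = \lambda_{k-1,i-1}^q$ down to $\lambda_{0,0} = 1$. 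The inductive step specialises the closed form to
\[
\delta_{s(d-1)} \;=\; \alpha_{(s-1)(d-1)}^{q^{d-1}} + b^{q^{d-2}}\,\delta_{(s-1)(d-1)}^{q^{d-1}},
\]
invokes (i) to kill the first summand, and reduces the exponent identity to $q^{d-2} + q^{d-1}E_{s-1} = E_s$, which is a one-line check.

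To finish, fix any $n$ with $d \le n \le (d-1)d$ and $d \nmid n$ (the case $n < d$ being trivial because $|\mathbb{F}_{q^n}| = q^n < q^d$). Among the $d - 1$ consecutive integers $n - d + 1, \ldots, n - 1$ exactly one is divisible by $d - 1$; write it as $s(d-1)$, and note that $n \le (d-1)d$ forces $s \le d - 1$, so both (i) and (ii) apply. Setting $i := n - s(d-1) \in \{1, \ldots, d-1\}$ and substituting into the closed form produces
\[
\lambda_{n,i} \;=\; 0 + b^{q^{i-1}}\,(b^{E_s})^{q^i} \;=\; b^{q^{i-1} + q^i E_s},
\]
a nonzero monomial in $b$. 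Hence the requirement $\lambda_{n,i} = 0$ forces $b = 0$. But then $L = x^{q^d} - ax = x(x^{q^d-1} - a)$; the nonzero roots in $\mathbb{F}_{q^n}$ lie among the solutions of $x^{q^d-1} = a$, of which there are at most $\gcd(q^d-1, q^n-1) = q^{\gcd(d,n)} - 1$, giving at most $q^{\gcd(d,n)} < q^d$ total roots because $d \nmid n$. This contradicts complete splitting, so no such $L$ exists.
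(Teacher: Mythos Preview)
Your argument is correct and follows a route that is recognisably parallel to, but genuinely different from, the paper's. Both proofs set up the same kind of recursion---yours for the coefficients $\lambda_{k,i}$ of $\sigma^k$ reduced modulo $L$ in the twisted polynomial ring, the paper's for the entries $M_k$ of the matrix product $A_k=C_LC_L^q\cdots C_L^{q^{k-1}}$---and both prove a vanishing pattern on blocks of indices before isolating one coefficient that must vanish yet is visibly a nonzero monomial. The key distinction is \emph{which} variable that monomial is in. The paper tracks the $(1,d)$ entry, shows $M_{id+1}=a^{1+q^d+\cdots+q^{id}}$, observes that for $id+1\le n\le (i+1)d-1$ this quantity lands in an off-diagonal position of $A_n$, and hence forces $a=0$; it then finishes by noting that all $M_k$ collapse to $0$, so the $(1,1)$ entry cannot equal $1$. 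You instead track the constant and top coefficients $\alpha_k,\delta_k$, produce $\lambda_{n,i}=b^{q^{i-1}+q^iE_s}$, and force $b=0$; your endgame is then the clean divisor count $\gcd(q^d-1,q^n-1)=q^{\gcd(d,n)}-1<q^d-1$. A pleasant by-product of your route is that the deduction $b=0$ does not use $d\nmid n$ and is exactly what the paper proves separately in the $d\mid n$ case (Theorem~\ref{d divides n}), so your argument partially unifies Sections~\ref{nless} and~\ref{ddivn}.

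Two small indexing slips to tidy up. First, claim~(i) is false at $s=1$ since $\alpha_0=1$; fortunately you only invoke it for $s\ge 2$ in the inductive step of~(ii), the base case being handled directly. Second, in the final substitution you actually need $\alpha_{s(d-1)}=0$ (not $\alpha_{(s-1)(d-1)}=0$) for $1\le s\le d-1$, so in particular for $s=d-1$; this is still true because $(d-1)^2$ sits in the last gap $G_{d-2}=\{(d-1)^2\}$, but it lies one step beyond the range you stated for~(i). Phrasing the vanishing as ``$\alpha_{t(d-1)}=0$ for $1\le t\le d-1$'' covers both uses at once.
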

\begin{proof}
We will write $A_{n}$ instead of $A_{L,n}$ as $L$ is fixed throughout the proof.

If $n=1$ then $A_{1}=C_L\neq I_d$. Indeed, if $n\leq d-1$ then the $(1,1)$ entry of $A_{n}$ is $0$, so $A_{n}\neq I_d$.

Note that $A_{n}=A_{n-1}C_L^{q^{n-1}}$. But the 1st column of $C_L^{q^{n-1}}$ is 
$\begin{pmatrix}0 1 0 \dots 0 \end{pmatrix}^T$. 
Thus, the $(1,1)$ entry of $A_{n}$ is the $(1,2)$ entry of $A_{n-1}$. If $n\geq d$ then the $(1,1)$ entry of $A_{n}$ is also the $(1,d)$ entry of $A_{n-d+1}$.

Let $M_k$ denote the $(1,d)$ entry of $A_{k}$. Then $M_1=a$, and $M_k=0$ for $k=2,\dots,d-1$, since $M_k=\begin{pmatrix}
0 \dots 0 M_1 \dots M_{k-1}
\end{pmatrix}\cdot\begin{pmatrix}
a^{q^{k-1}} b^{q^{k-1}} 0 \dots 0
\end{pmatrix}^T$.

Set $M_0=0$. Then for $k\geq d$, we have a recursive formula, which follows directly from matrix multiplication: 
\begin{equation}\label{recursion}
 M_k=M_{k-d}a^{q^{k-1}}+M_{k-d+1}b^{q^{k-1}}.
\end{equation}

\underline{Claim:} $M_j=0$ for $j=id+2,\dots,(i+1)d-(i+1)$ and $i=0,\dots,d-3$.

\underline{Proof of Claim:} We prove the claim by induction on $i$. The base case $i=0$ was done above. Note that if $M_{k-d}=M_{k-d+1}=0$ then $M_k=0$. So if the claim is true for $i$, then we have $M_{id+2+d}=0,\dots,M_{(i+1)d-(i+1)+d-1}=0$, i.e. the claim is true for $i+1$. This completes the proof of the claim.

Note that when $i=d-3$, then $id+2=(i+1)d-(i+1)$, so the claim is not true for $i=d-2$.

For the remaining $n$ not divisible by $d$, we will show that the $(1,1)$ entry of $A_{n}$ cannot be 1 if the $(1,j)$ entry is 0 for some $j\in\{2,\dots,d\}$, and thus  $A_{n}$  cannot be the identity matrix. Note that the $(1,j)$ entry of $A_{n}$ is $M_{n-d+j}$.

For $i=1,\dots,d-2$, we have $M_{(i-1)d+2}=0$ and thus $$M_{id+1}=M_{(i-1)d+1}a^{q^{id}}.$$ Since $M_1=a$, we have $M_{id+1}=a^{1+q^d+\dots+q^{id}}$.  But $M_{id+1}$ is the $(1,(i+1)d+1-n)$ entry of $A_{n}$ for $n=id+1,\dots,(i+1)d-1$. If $A_{n}=I_d$ then the $(1,(i+1)d+1-n)$ entry must be $0$, so we must have $M_{id+1}=a^{1+q^d+\dots+q^{id}}=0$, and thus $a=0$. 

Recall  that the $(1,1)$ entry of $A_{n}$ is $M_{n-d+1}$. But $M_{n-d+1}$ must be either 0 or a power of $a$, since all initial values $M_0,\dots,M_{d-1}$ of the recursive formula are either $a$ or $0$. Therefore, if $a=0$, we have $M_{n-d+1}=0$ and so $A_{n}\neq I_d$.
\end{proof}

\begin{Remark}\label{A_id}
The proof is not valid when $d$ divides $n$. If $n=id$ with $i\in\{1,\dots,d-1\}$,  the $(1,1)$ entry of $A_{id}$ is $M_{(i-1)d+1}=a^{1+q^d+\dots+q^{(i-1)d}}$ for $i\geq1$, and so we have the equation $a^{1+q^d+\dots+q^{(i-1)d}}=1$ and cannot deduce that $a=0$.
\end{Remark}

The recursive formula \eqref{recursion} established in the proof of Theorem \ref{small_n} is valid in greater generality: Set $M_{l,l-d}=1$, and $M_{l,k}=0$ for $k\leq0$ and $k\ne l-d$. For $1\leq l\leq d$ and $k\geq1$, let
\begin{equation}\label{allrecursion}
 M_{l,k}=M_{l,k-d}a^{q^{k-1}}+M_{l,k-d+1}b^{q^{k-1}}.
\end{equation}
Then $M_{l,k}$ is the $(l,d)$ entry of $A_{L,k}$. Furthermore, the $(l,j)$ entry of $A_{L,k}$ is $M_{l,k-d+j}$.

\section{Fixed $d$  dividing $n$ and $n\leq(d-1)d$}\label{ddivn}

In the case that $d$ divides $n$, we have a solution, namely $a=1$ and $b=0$, i.e., the polynomial $x^{q^d}-x$ splits completely because $\mathbb{F}_{q^n}$ has a subfield $\mathbb{F}_{q^d}$. We now characterize exactly which polynomials split completely.

\begin{Theorem}\label{d divides n}
Let $n=id$ with $i\in\{1,\dots,d-1\}$. Let $L=x^{q^d}-bx^q-ax\in\mathbb{F}_{q^n}[x]$. Then $L$ has $q^d$ roots in $\mathbb{F}_{q^n}$ if and only if $a^{1+q^d+\dots+q^{(i-1)d}}=1$ and $b=0$.
\end{Theorem}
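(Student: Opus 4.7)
The plan is to prove the two implications separately. For the forward direction, I would examine two specific entries of the $d\times d$ matrix $A_{L,id}$ and read off the two stated conditions from their required values. First, Remark~\ref{A_id} already identifies the $(1,1)$ entry as $M_{1,(i-1)d+1}=a^{1+q^d+\cdots+q^{(i-1)d}}$, so the hypothesis $A_{L,id}=I_d$ immediately gives the norm condition, and in particular $a\neq 0$. Second, I would isolate an off-diagonal entry in row~$1$ of the shape $a\cdot b^{(\text{nonzero})}$; forcing it to vanish will then yield $b=0$.

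Specifically, I claim that
$$M_{1,\,i(d-1)+1} \;=\; a\cdot b^{\,q^{d-1}+q^{2(d-1)}+\cdots+q^{i(d-1)}}\qquad\text{for all } 1\le i\le d-1.$$
Since $i(d-1)+1 = id-d+(d-i+1)$, this is the $(1,d-i+1)$ entry of $A_{L,id}$, and the column index $d-i+1$ lies in $\{2,\dots,d\}$, so the entry is off-diagonal. I would prove the claim by induction on $i$. For the base $i=1$, the recursion \eqref{allrecursion} yields $M_{1,d}=M_{1,0}a^{q^{d-1}}+M_{1,1}b^{q^{d-1}}=ab^{q^{d-1}}$. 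For the step, the same recursion gives
$$M_{1,(i+1)(d-1)+1}=M_{1,i(d-1)}\,a^{q^{(i+1)(d-1)}}+M_{1,i(d-1)+1}\,b^{q^{(i+1)(d-1)}},$$
in which the first term vanishes because $j=i(d-1)$ is the upper endpoint of the interval $(i-1)d+2,\dots,id-i$ covered by the Claim inside the proof of Theorem~\ref{small_n} (with $i'=i-1$, valid for $1\le i\le d-2$, exactly the range needed to close the induction up to $i+1=d-1$); the second term is handled by the inductive hypothesis. Combined with $a\neq 0$, vanishing of this entry then forces $b=0$.

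For the converse, the assumption $b=0$ collapses $L$ to the binomial $L(x)=x^{q^d}-ax$, and I would simply count roots in $\mathbb{F}_{q^{id}}$: the nonzero roots satisfy $x^{q^d-1}=a$, and in $\mathbb{F}_{q^{id}}^{\,*}$ this equation has exactly $\gcd(q^d-1,q^{id}-1)=q^d-1$ solutions (using $d\mid id$) precisely when $a^{(q^{id}-1)/(q^d-1)}=a^{1+q^d+\cdots+q^{(i-1)d}}=1$. Together with $x=0$, this produces the required $q^d$ roots.

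The main obstacle is setting up the inductive formula for $M_{1,i(d-1)+1}$: one must correctly identify the walk position $i(d-1)$ as the upper endpoint of a vanishing-range of the earlier Claim, and verify the index inequalities $1\le i\le d-2$ at which the induction propagates. Once this bookkeeping matches the ranges needed, both implications follow with only the computations sketched above.
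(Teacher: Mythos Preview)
Your forward direction is essentially identical to the paper's: both locate the entry $M_{1,i(d-1)+1}$ in position $(1,d-i+1)$ of $A_{L,id}$, unwind it via the recursion and the vanishing Claim from the proof of Theorem~\ref{small_n} to obtain $ab^{q^{d-1}+\cdots+q^{i(d-1)}}$, and then use $a\neq 0$ (from the $(1,1)$ entry) to conclude $b=0$. Your index bookkeeping is correct; in particular $i(d-1)=id-i$ is exactly the upper endpoint of the vanishing range with parameter $i'=i-1\in\{0,\dots,d-3\}$, which suffices to push the induction up to $i+1=d-1$.

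Your converse, however, is genuinely different from the paper's. The paper stays inside the matrix framework: with $b=0$ it computes the entire first column of $A_{L,id}$ via the recursion, obtaining $M_{l,(i-1)d+1}=M_{l,1-d}\cdot a^{1+q^d+\cdots+q^{(i-1)d}}=M_{l,1-d}$, and then invokes \cite[Corollary~3.2]{CMPZu} to deduce $A_{L,id}=I_d$. You instead bypass the matrix entirely and count roots of the binomial $x^{q^d}-ax$ directly in $\mathbb{F}_{q^{id}}^{\,*}$ using the standard cyclic-group argument and $\gcd(q^d-1,q^{id}-1)=q^d-1$. Your route is more elementary and self-contained (no external citation needed), while the paper's route keeps the argument uniform with the rest of the article and illustrates how \cite[Corollary~3.2]{CMPZu} reduces the check $A_L=I_d$ to a single column. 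Both are valid.
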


\begin{proof}
By Remark \ref{A_id}, if $L$ splits completely, we have $a^{1+q^d+\dots+q^{(i-1)d}}=1$. Now the $(1,d+1-i)$ entry of $A_{id}$ is $M_{1,i(d-1)+1}$. For $i=1$, this is $M_{1,d}=ab^{q^{d-1}}$. For $i\geq2$, we have $M_{1,i(d-1)+1}=M_{1,(i-1)d-(i-1)}a^{q^{i(d-1)}}+M_{1,(i-1)(d-1)+1}b^{q^{i(d-1)}}$. But by the claim in the proof of Theorem \ref{small_n}, we have $M_{1,(i-1)d-(i-1)}=0$ for $i=2,\dots,d-1$. Thus 
\begin{align*}
M_{1,i(d-1)+1}&=M_{1,(i-1)(d-1)+1}b^{q^{i(d-1)}}\\
&=M_{1,(i-2)(d-1)+1}b^{q^{(i-1)(d-1)}+q^{i(d-1)}}\\
&=\dots\\
&=ab^{q^{d-1}+q^{2(d-1)}+\cdots+q^{i(d-1)}}.
\end{align*}
But if $A_{id}=I_d$ then $M_{1,i(d-1)+1}=0$, and since $a\neq0$, we must have $b=0$.

To show the converse, assume that $a^{1+q^d+\dots+q^{(i-1)d}}=1$ and $b=0$. Then the $(l,1)$ entry of $A_{id}$ is 
\begin{align*}
M_{l,(i-1)d+1}&=M_{l,(i-2)d+1}a^{q^{(i-1)d}}\\
&=\dots\\
&=M_{l,1-d}a^{1+q^d+\cdots+q^{(i-1)d}}\\
&=M_{l,1-d}\\
&=
\begin{cases}
1 \text{ for } l=1,\\
0 \text{ for } l=2,\dots,d.
\end{cases}
\end{align*}
By \cite[Corollary 3.2]{CMPZu}, this implies that $A_{id}=I_d$.
\end{proof}

\section{Fixed $d$ and $n=(d-1)d+1$}\label{nexact}

In this section we will prove some preliminary results which are part of the 
proof of Theorem \ref{all} part 3.

\subsection{Assuming $L$ splits completely}

If $n=(d-1)d+1$, then, the $(1,j)$ entry of $A_{L,n}$ is $M_{1,(d-2)d+j+1}$ (where $j=1,\dots,d$). So to get $A_{L,n}=I_d$, the following system of equations has to be satisfied for $l=1,\dots,d$
\begin{equation}\label{system1}
\begin{cases}
M_{l,(d-2)d+l+1}=1\\
M_{l,(d-2)d+j+1}=0 \text{ for } j=1,\dots,l-1,l+1,\dots,d
\end{cases}
\end{equation}

\begin{Lemma}\label{a_11}
$\displaystyle{M_{1,(d-2)d+2}=ab^{e_2}}$ where
$e_2=\frac{q^{(d-1)d}-q^{d-1}}{q^{d-1}-1}$.
\end{Lemma}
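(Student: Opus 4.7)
The plan is to recognise that $(d-2)d+2 = (d-1)^2+1$, so the target index has the form $j(d-1)+1$ with $j=d-1$. This suggests introducing the auxiliary quantities
\begin{equation*}
T_j := M_{1,\,j(d-1)+1}, \qquad j=0,1,\dots,d-1,
\end{equation*}
so that $T_0 = M_{1,1} = a$ (immediate from the initial conditions) and the quantity we seek is $T_{d-1}$. The goal is then to establish a clean recursion $T_j \mapsto T_{j-1}$.

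Applying \eqref{allrecursion} with $l=1$ and $k=j(d-1)+1$, one checks that $k-d = (j-1)(d-1)$ and $k-d+1 = (j-1)(d-1)+1$, so
\begin{equation*}
T_j \;=\; M_{1,\,(j-1)(d-1)}\cdot a^{q^{j(d-1)}} \;+\; T_{j-1}\cdot b^{q^{j(d-1)}}.
\end{equation*}
The crux of the proof is to show that the first summand vanishes for each $j=1,\dots,d-1$. For $j=1$ this is the initial condition $M_{1,0}=0$. For $j\geq 2$ I would invoke the claim in the proof of Theorem~\ref{small_n} with its parameter chosen as $i=j-2\in\{0,\dots,d-3\}$: the corresponding zero range $\{id+2,\dots,(i+1)d-(i+1)\}$ has right-hand endpoint exactly $(j-1)(d-1)$, and the inequality $(j-2)d+2\leq (j-1)(d-1)$ holds precisely when $j\leq d-1$, which is the range we need.

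Once the vanishing is in hand the recursion collapses to $T_j = T_{j-1}\, b^{q^{j(d-1)}}$, so iterating from $T_0=a$ gives
\begin{equation*}
T_{d-1} \;=\; a\cdot b^{\,q^{d-1} + q^{2(d-1)} + \cdots + q^{(d-1)(d-1)}}.
\end{equation*}
The exponent is a geometric sum equal to $q^{d-1}(q^{(d-1)^2}-1)/(q^{d-1}-1) = (q^{(d-1)d}-q^{d-1})/(q^{d-1}-1) = e_2$, which completes the proof.

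The main obstacle is not conceptual but indexing: one needs to spot that the ``diagonal'' indices $j(d-1)+1$ are precisely the ones that remain pure in $a$ and $b$, while their immediate predecessors $(j-1)(d-1)$ sit at the right endpoints of the zero ranges of Theorem~\ref{small_n}. Once this alignment is noticed, the induction and the geometric-series identity are routine.
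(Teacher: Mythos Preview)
Your proof is correct and is essentially the same as the paper's: both unroll the recursion \eqref{allrecursion} along the same chain of indices, kill the $a$-term at each step via the claim from Theorem~\ref{small_n}, and then sum the resulting geometric series in the exponent of $b$. Your parameterisation $T_j=M_{1,j(d-1)+1}$ is just a relabelling of the paper's sequence $M_{1,(d-j)d+j}$ (via $j\leftrightarrow d+1-j$), and your vanishing index $(j-1)(d-1)$ is exactly the paper's $(d-j-1)d+j$ under this correspondence.
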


\begin{proof}
By the recursive formula \eqref{allrecursion}, $M_{1,(d-2)d+2}=M_{1,(d-3)d+2}a^{q^{(d-2)d+1}}+M_{1,(d-3)d+3}b^{q^{(d-2)d+1}}$. But it follows from the claim in the proof of Theorem \ref{small_n} that  $M_{1,(d-j-1)d+j}=0$ for $j=2,\dots,d-1$. Thus 
\begin{align*}
M_{1,(d-2)d+2}&=M_{1,(d-3)d+3}b^{q^{(d-2)d+1}}=
M_{1,(d-4)d+4}b^{q^{(d-2)d+1}+q^{(d-3)d+2}}=\dots \\
&=M_{1,d} b^{q^{(d-2)d+1}+q^{(d-3)d+2}+\dots+q^{2d-2}}\\
&=ab^{q^{(d-2)d+1}+q^{(d-3)d+2}+\dots+q^{2d-2}+q^{d-1}}\\
&=ab^{\sum_{i=1}^{d-1}q^{i(d-1)}}\\  &=ab^{e_2}.
\end{align*}
\end{proof}

\begin{Lemma}\label{a_1d}
$M_{1,(d-1)d+1}=a^{e_1}+ab^{e_2+q^{(d-1)d}}$
where $e_1=\frac{q^{d^2}-1}{q^d-1}$ and $e_2=\frac{q^{(d-1)d}-q^{d-1}}{q^{d-1}-1}$.
\end{Lemma}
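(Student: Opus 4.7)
The plan is a direct one-step application of the recursion \eqref{allrecursion}, combining the preceding Lemma \ref{a_11} with the formula for $M_{1,id+1}$ already derived inside the proof of Theorem \ref{small_n}.

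First, I would specialize \eqref{allrecursion} at $l=1$ and $k=(d-1)d+1$, obtaining
\begin{equation*}
M_{1,(d-1)d+1}=M_{1,(d-2)d+1}\,a^{q^{(d-1)d}}+M_{1,(d-2)d+2}\,b^{q^{(d-1)d}}.
\end{equation*}
The second summand is handled immediately by Lemma \ref{a_11}: substituting $M_{1,(d-2)d+2}=ab^{e_2}$ gives the contribution $ab^{e_2+q^{(d-1)d}}$, which is exactly the second term of the claimed expression.

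For the first summand I would invoke the identity
$M_{1,id+1}=a^{1+q^d+\dots+q^{id}}$
that was established in the proof of Theorem \ref{small_n} (it is proved there by induction using the vanishing result $M_{1,(i-1)d+2}=0$ together with the recursion). Taking $i=d-2$ gives $M_{1,(d-2)d+1}=a^{1+q^d+\dots+q^{(d-2)d}}$, so
\begin{equation*}
M_{1,(d-2)d+1}\,a^{q^{(d-1)d}}=a^{1+q^d+q^{2d}+\dots+q^{(d-2)d}+q^{(d-1)d}}=a^{e_1},
\end{equation*}
since $e_1=\sum_{j=0}^{d-1}q^{jd}=\frac{q^{d^2}-1}{q^d-1}$. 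Adding the two contributions yields the claimed formula.

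There is no real obstacle here; the only things to check are that the two ingredients are genuinely applicable at the indices requested, namely that $(d-2)d+1$ falls into the range covered by the ``$M_{1,id+1}=a^{1+q^d+\dots+q^{id}}$'' computation (it does, since $i=d-2\in\{0,\dots,d-2\}$, and the inductive step of that computation relied on $M_{1,(i-1)d+2}=0$, which is valid for $i\le d-2$ by the claim in the proof of Theorem \ref{small_n}), and that Lemma \ref{a_11} supplies exactly the value needed for the $b$-coefficient. Both hold, so the lemma follows at once.
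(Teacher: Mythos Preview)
Your proof is correct and follows essentially the same route as the paper: one application of the recursion \eqref{allrecursion} at $k=(d-1)d+1$, then Lemma~\ref{a_11} for the $b$-term and the identity $M_{1,id+1}=a^{1+q^d+\dots+q^{id}}$ from the proof of Theorem~\ref{small_n} (with $i=d-2$) for the $a$-term. Your added remark checking that $i=d-2$ lies in the admissible range is a nice touch but otherwise the arguments coincide.
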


\begin{proof}
By the recursive formula \eqref{allrecursion}, 
\[
M_{1,(d-1)d+1}=M_{1,(d-2)d+1}a^{q^{(d-1)d}}+M_{1,(d-2)d+2}b^{q^{(d-1)d}}.
\]
 By Lemma \ref{a_11}, $M_{1,(d-2)d+2}=ab^{e_2}$. 
Also $M_{1,(d-2)d+1}=a^{1+q^d+\dots+q^{(d-2)d}}$ as established in the proof of Theorem \ref{small_n}.

Therefore 
\begin{align*}
M_{1,(d-1)d+1}&=a^{\sum_{i=0}^{d-1}q^{id}}+ab^{e_2+q^{(d-1)d}}\\
&=a^{e_1}+ab^{e_2+q^{(d-1)d}}.
\end{align*}
\end{proof}

\begin{Theorem}\label{direction1}
Let $n=(d-1)d+1$. Let $L=x^{q^d}-bx^q-ax\in\mathbb{F}_{q^n}[x]$. If $L$ has $q^d$ roots in $\mathbb{F}_{q^n}$ then 
\begin{enumerate}
\item $a^{1+q+\dots+q^{(d-1)d}}=(-1)^{d-1} \mbox{ and } $
\item $a^{1+q e_1 e_2}=(-1)^{d-1} \mbox{ and }$
\item $b=-a^{qe_1},$
\end{enumerate}
where $e_1=\frac{q^{d^2}-1}{q^d-1}$ and $e_2=\frac{q^{(d-1)d}-q^{d-1}}{q^{d-1}-1}$.
\end{Theorem}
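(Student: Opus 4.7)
My plan is to extract just two scalar equations from $A_{L,n}=I_d$---the $(1,1)$ and $(1,d)$ entries of the first row---and reduce the whole theorem to them. Since the $(1,j)$ entry of $A_{L,n}$ equals $M_{1,(d-2)d+j+1}$, the conditions $M_{1,(d-2)d+2}=1$ and $M_{1,(d-1)d+1}=0$ are forced. By Lemma \ref{a_11} the first reads $ab^{e_2}=1$, and by Lemma \ref{a_1d} the second reads $a^{e_1}+ab^{e_2+q^{(d-1)d}}=0$. Factoring $ab^{e_2}=1$ out of the second equation collapses it to $b^{q^{(d-1)d}}=-a^{e_1}$.

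Point~3 then falls out by a single Frobenius: raising $b^{q^{(d-1)d}}=-a^{e_1}$ to the $q$-th power and using $b^{q^n}=b$ (since $n=(d-1)d+1$ and $b\in\mathbb{F}_{q^n}$) gives $b=(-1)^q a^{qe_1}=-a^{qe_1}$, where $(-1)^q=-1$ holds in every characteristic (trivially in characteristic two, where $-1=1$). Substituting $b=-a^{qe_1}$ back into $ab^{e_2}=1$ yields $(-1)^{e_2} a^{1+qe_1 e_2}=1$. Since $e_2=\sum_{j=1}^{d-1} q^{j(d-1)}$ is a sum of $d-1$ odd numbers when $q$ is odd (and the sign is irrelevant in characteristic two), $(-1)^{e_2}=(-1)^{d-1}$, which is point~2.

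For point~1 I would prove the exponent identity $1+qe_1 e_2 \equiv (q^n-1)/(q-1)\pmod{q^n-1}$; combined with point~2 and $a^{q^n-1}=1$, this delivers $N(a)=(-1)^{d-1}$. Expanding $qe_1 e_2$, its exponents of $q$ are the integers $1+id+j(d-1)$ for $0\le i\le d-1$ and $1\le j\le d-1$, giving $n-1$ terms; appending the leading $1$ accounts for the zeroth exponent. To compare them modulo $n=d(d-1)+1$, I would use $d(d-1)\equiv -1\pmod n$ and $d^2\equiv d-1\pmod n$, which let me rewrite $id+j(d-1)\equiv d(i+jd)\pmod n$. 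The set $\{i+jd : 0\le i\le d-1,\ 1\le j\le d-1\}$ is exactly $\{d,d+1,\dots,d^2-1\}$, reducing modulo $n$ to $\{0,1,\dots,n-1\}\setminus\{d-1\}$; since multiplication by $d$ is a bijection on $\mathbb{Z}/n\mathbb{Z}$ sending $d-1$ to $-1\equiv n-1$, the exponent set $\{id+j(d-1)\bmod n\}$ equals $\{0,1,\dots,n-2\}$. Shifting by $+1$ and adjoining the exponent $0$ recovers every residue modulo $n$, so $1+qe_1 e_2\equiv\sum_{k=0}^{n-1}q^k$ modulo $q^n-1$, as required.

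The only real obstacle is this last combinatorial exponent identity; everything else is direct manipulation of two scalar equations and a single Frobenius twist. The clean arithmetic $d^3\equiv -1\pmod n$ underlying $d^2\equiv d-1\pmod n$ makes the reduction transparent, so I do not anticipate further complications.
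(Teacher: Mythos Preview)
Your derivation of conclusions~2 and~3 is exactly the paper's: read off the $(1,1)$ and $(1,d)$ entries via Lemmas~\ref{a_11} and~\ref{a_1d}, combine $ab^{e_2}=1$ with $a^{e_1}+ab^{e_2+q^{(d-1)d}}=0$ to get $b^{q^{(d-1)d}}=-a^{e_1}$, apply Frobenius once for $b=-a^{qe_1}$, substitute back, and identify $(-1)^{e_2}=(-1)^{d-1}$.

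Where you diverge is conclusion~1. The paper does \emph{not} deduce $N(a)=(-1)^{d-1}$ from conclusion~2; it invokes an external determinant/norm identity (\cite[Corollary~1]{MCGUIRE201968}), which gives $N(-a)=(-1)^{nd}$ directly from the hypothesis that $L$ splits. The exponent congruence $1+qe_1e_2\equiv (q^n-1)/(q-1)\pmod{q^n-1}$ that you prove here is established separately in the paper as Lemma~\ref{expos}, and is used there only to show that conclusion~1 \emph{implies} conclusion~2 (Corollary~\ref{direction1_2}), making~2 redundant once~1 is known. Your route runs this implication in the other direction, obtaining~1 from~2, which is equally valid since the congruence makes the two conditions equivalent for $a\in\mathbb{F}_{q^n}^\times$. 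The payoff of your organization is self-containment: you avoid the external citation entirely. Your combinatorial argument for the congruence---rewriting $id+j(d-1)\equiv d(i+jd)\pmod n$ via $d^2\equiv d-1$, then using that multiplication by $d$ is a bijection on $\mathbb{Z}/n\mathbb{Z}$ (since $d^3\equiv -1$) sending the missing residue $d-1$ to $n-1$---is cleaner than the paper's case analysis in Lemma~\ref{expos}.
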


\begin{proof}
If $A_{L,n}=I_d$, then equation \eqref{system1} has to be satisfied. By Lemma \ref{a_11}, we have $ab^{e_2}=1$ (the $(1,1)$ entry of $A_{L,n}$), and by Lemma \ref{a_1d}, we have $a^{e_1}+ab^{e_2+q^{(d-1)d}}=0$ (the $(1,d)$ entry of $A_{L,n}$). But if $ab^{e_2}=1$, then $a^{e_1}+ab^{e_2+q^{(d-1)d}}=a^{e_1}+b^{q^{(d-1)d}}$, and thus we have $b^{q^{(d-1)d}}=-a^{e_1}$. Raising both sides to the power of $q$ gives us $b^{q^n}=(-1)^q a^{qe_1}$.  Since $q$ is a prime power, $(-1)^q=-1$ in $\mathbb{F}_{q^n}$. Thus, $b=-a^{qe_1}$ which proves the third conclusion.

Lemma \ref{a_11} says $ab^{e_2}=1$ which now implies $$a^{-1}=b^{e_2}=(-a^{qe_1})^{e_2}=(-1)^{e_2}a^{q e_1 e_2},$$ and so $a^{1+q e_1 e_2}=(-1)^{e_2}$. (Note that $a\ne0$ since $ab^{e_2}=1$.)

Recall that $e_2=\sum_{i=1}^{d-1}q^{i(d-1)}$. So if $q$ is even, then $e_2$ is even. If $q$ is odd, then $q^{i(d-1)}$ is odd for all $i=1,\dots,d-1$. So if $d-1$ is even, then $e_2$ is an even sum of odd numbers and thus even, and if $d-1$ is odd, then $e_2$ is an odd sum of odd numbers and thus odd. Thus, $(-1)^{e_2}=(-1)^{q(d-1)}$. Since $(-1)^q=-1$ in $\mathbb{F}_{q^n}$ we have $(-1)^{e_2}=(-1)^{d-1}$.

By \cite[Corollary 1]{MCGUIRE201968}, if $L$ splits, then $N(-a)=(-1)^{nd}N(1)$, where $N$ is the norm function over $\mathbb{F}_{q^n}$. So we have the additional condition $N(a)=(-1)^{n(d-1)}$ or $a^{\frac{q^n-1}{q-1}}=(-1)^{n(d-1)}$. But $n=(d-1)d+1$, so $n$ is always odd. Consequently, $(-1)^{n(d-1)}=(-1)^{d-1}$.

Hence, $a$ satisfies the equations
\begin{equation}\label{system2}
\begin{cases}
a^{1+q e_1 e_2}=(-1)^{d-1}\\
a^{\frac{q^n-1}{q-1}}=(-1)^{d-1}.
\end{cases}
\end{equation} 
\end{proof}

In the next section we will show that conclusion 1 of this theorem actually implies conclusion 2.

\subsection{GCD of $x^k\pm1$ and $x^l\pm1$}

The GCD of $x^k-1$ and $x^l-1$ is well known to be $x^{\gcd(k,l)}-1$, but we are interested in the GCD of $x^k+1$ and $x^l+1$. The following is surely well known, but we include a proof.

\begin{Theorem}\label{gcd}
The GCD of $x^k+1$ and $x^l+1$ is $x^{\gcd(k,l)}+1$ if $\frac{k}{\gcd(k,l)}$ and $\frac{l}{\gcd(k,l)}$ are both odd, and $1$ otherwise.
\end{Theorem}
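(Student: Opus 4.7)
The plan is to pin down $G := \gcd(x^k+1,\,x^l+1)$ from above by a simple polynomial and then check divisibility in each direction. Set $d=\gcd(k,l)$ and write $k=da$, $l=db$ with $\gcd(a,b)=1$. First I would use that $x^m+1$ divides $x^{2m}-1$ for every $m$, yielding
\[
G \ \mid\ \gcd(x^{2k}-1,\,x^{2l}-1)=x^{\gcd(2k,2l)}-1=x^{2d}-1=(x^d-1)(x^d+1).
\]
Next I would argue that $G$ is coprime to $x^d-1$: since $d\mid k$ we have $x^d-1\mid x^k-1$, so any common factor of $G$ and $x^d-1$ must divide $\gcd(x^k+1,x^k-1)$, which in turn divides the constant $(x^k+1)-(x^k-1)=2$. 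Working over $\mathbb{Z}[x]$ (or any base where $2$ is a unit) this forces $G\mid x^d+1$.

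It then remains to decide whether $x^d+1$ is itself a common divisor. Writing $y=x^d$, we have $x^k+1=y^a+1$, so $y+1=x^d+1$ divides $x^k+1$ exactly when $a$ is odd, since modulo $y+1$ one has $y\equiv -1$ and hence $y^a+1\equiv(-1)^a+1$. So if $a$ and $b$ are both odd, then $x^d+1$ divides both $x^k+1$ and $x^l+1$, hence divides $G$; combined with $G\mid x^d+1$ this gives $G=x^d+1$. If instead one of $a,b$ is even (they cannot both be even because $\gcd(a,b)=1$), say $a$, then modulo $x^d+1$ one gets $x^k+1\equiv(-1)^a+1=2$, i.e.\ $x^d+1\mid x^k-1$, and therefore $\gcd(x^d+1,x^k+1)\mid\gcd(x^k-1,x^k+1)\mid 2$, a unit. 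Since $G$ divides both $x^d+1$ and $x^k+1$, this forces $G=1$.

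I do not expect a serious obstacle; the only delicate step is the passage from $G\mid(x^d-1)(x^d+1)$ to $G\mid x^d+1$, which is handled by the routine fact that $x^d-1$ and $x^d+1$ share only a divisor of $2$. One small caveat worth noting is the characteristic-$2$ situation, in which $x^k+1=x^k-1$ and the classical formula $\gcd(x^k-1,x^l-1)=x^{\gcd(k,l)}-1=x^d+1$ applies unconditionally; this is consistent with the ``both odd'' branch of the statement and means the theorem is most naturally read over $\mathbb{Z}[x]$, as the later applications in the paper require.
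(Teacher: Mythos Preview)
Your argument is correct and takes a different route from the paper's. The paper works directly with a B\'ezout relation $sk+tl=d$: from $x^k\equiv x^l\equiv -1\pmod G$ it obtains $G\mid x^d-(-1)^{s+t}$, then checks when this bound is attained by reducing $x^k+1$ and $x^l+1$ modulo $x^d-(-1)^{s+t}$ and analysing the parity of $s+t$ via $se+tf=1$. You instead pass through $x^{2m}-1$, invoke the classical identity $\gcd(x^{2k}-1,x^{2l}-1)=x^{2d}-1$, factor as $(x^d-1)(x^d+1)$, and peel off the $x^d-1$ factor using the coprimality of $x^k\pm1$. Your version has the merit of treating the ``otherwise'' case (one of $a,b$ even) completely and explicitly, whereas the paper's write-up only spells out the ``both odd'' branch and leaves the complementary case implicit; conversely, the B\'ezout argument is self-contained and does not need the $x^m-1$ formula as an external lemma. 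Your remark on characteristic~$2$ is apt and worth retaining, since the paper is silent on this point; likewise your observation that the step $G\mid(x^d-1)(x^d+1)\Rightarrow G\mid x^d+1$ hinges on $\gcd(x^k+1,x^k-1)\mid 2$ is exactly the place where the ground ring matters, and your handling (primitivity over $\mathbb{Z}[x]$, or $2$ a unit) is adequate.
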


\begin{proof}
Let $d=\gcd(k,l)$ and let $s,t$ be B\'{e}zout Coefficients for $k$ and $l$, i.e. $sk+tl=d$. Let $g=\gcd(x^k+1,x^l+1)$. Then $x^k\equiv-1 \Mod{g}$ and $x^l\equiv-1 \Mod{g}$. Thus $x^{sk+tl}\equiv(-1)^{s+t} \Mod{g}$. So $g$ divides $x^{sk+tl}-(-1)^{s+t}=x^d-(-1)^{s+t}$. We need to check if $x^d-(-1)^{s+t}$ divides $x^k+1$ and $x^l+1$.

Let $e=\frac{k}{d}$ and $f=\frac{l}{d}$. Then $x^k+1=x^{ed}+1=((-1)^{s+t})^e+1 \Mod{x^d-(-1)^{s+t}}$ and similarly, $x^l+1=((-1)^{s+t})^f+1 \Mod{x^d-(-1)^{s+t}}$. So we need to have $(-1)^{(s+t)e}+1=0$ and $(-1)^{(s+t)f}+1=0$, i.e. $e,f,s+t$ all need to be odd. But $sk+tl=d$ implies $se+tf=1$, so $e,f$ odd implies $s+t$ odd. Thus if $e,f$ are odd, then $g=x^d-(-1)^{s+t}=x^d+1$.
\end{proof}

\begin{Remark}
Similarly, one can show that $\gcd(x^k-1,x^l+1)=x^{\gcd(k,l)}+1$ if $\frac{k}{\gcd(k,l)}$ is even and $\frac{l}{\gcd(k,l)}$ is odd.
\end{Remark}

\begin{Lemma}\label{expos}
Let $n=(d-1)d+1$ and let $e_1=\frac{q^{d^2}-1}{q^d-1}$ and $e_2=\frac{q^{(d-1)d}-q^{d-1}}{q^{d-1}-1}$.
Then 
\[
\gcd(1+qe_1e_2,\frac{q^n-1}{q-1})=\frac{q^n-1}{q-1}.
\]
\end{Lemma}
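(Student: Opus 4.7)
The plan is to show that $N:=\frac{q^n-1}{q-1}$ divides $1+qe_1e_2$, which is equivalent to the assertion $\gcd(1+qe_1e_2,N)=N$. Since $q^n-1=(q-1)N$, I will work modulo $N$, where $q^n\equiv 1$.

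First I would expand $qe_1e_2$ explicitly. Since $e_1=\sum_{i=0}^{d-1}q^{id}$ and $e_2=\sum_{j=1}^{d-1}q^{j(d-1)}$, we obtain
\[
qe_1e_2 \;=\; \sum_{i=0}^{d-1}\sum_{j=1}^{d-1} q^{f(i,j)},\qquad f(i,j):=id+j(d-1)+1,
\]
a sum of exactly $d(d-1)=n-1$ monomials. Using $q^n\equiv 1\pmod N$, each monomial reduces modulo $N$ to $q^{f(i,j)\bmod n}$. The heart of the proof is the combinatorial claim that the residues $\{f(i,j)\bmod n\}$ form a permutation of $\{1,2,\ldots,n-1\}$; granting it, $qe_1e_2 \equiv \sum_{k=1}^{n-1} q^k = N-1 \equiv -1\pmod N$, so $1+qe_1e_2\equiv 0\pmod N$.

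To prove the combinatorial claim, I would exploit $d(d-1)\equiv -1\pmod n$ (immediate from $n=d(d-1)+1$), which also yields $(d-1)^2\equiv -d\pmod n$. Multiplying $f(i,j)$ by the unit $d-1$ gives $(d-1)f(i,j)\equiv (d-1)-(i+jd)\pmod n$. The values $g(i,j):=i+jd$ for $(i,j)$ in our index set are exactly the consecutive integers $d,d+1,\ldots,d^2-1$ (each attained once), and reducing this block of $n-1$ consecutive integers modulo $n=d^2-d+1$ covers every residue except $d-1$. Subtracting from $d-1$ in $\mathbb{Z}/n\mathbb{Z}$ then sends the missing residue $d-1$ to $0$, so $(d-1)f(i,j)\bmod n$ ranges exactly over $\{1,\ldots,n-1\}$; multiplying through by $(d-1)^{-1}$ (which exists since $\gcd(d-1,n)=1$) gives the same conclusion for $f(i,j)$.

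The main obstacle is spotting the change of variable $f\mapsto(d-1)f$ that converts the awkward exponent $id+j(d-1)+1$ into the transparent linear form $i+jd$; once that is in place the counting of residues modulo $n$ is elementary.
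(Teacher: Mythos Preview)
Your proof is correct and follows the same overall strategy as the paper: both reduce the statement to showing that the map $f(i,j)=id+j(d-1)+1$, for $0\le i\le d-1$ and $1\le j\le d-1$, induces a bijection onto $\{1,\dots,n-1\}$ modulo $n$, and then conclude $1+qe_1e_2\equiv \sum_{k=0}^{n-1}q^k\equiv 0\pmod{N}$.

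The difference lies in how the combinatorial claim is established. The paper rewrites $f(i,j)=(i+j)d-(j-1)$ and argues directly, splitting into the cases $i+j\le d-1$ (where $f(i,j)\le n$) and $i+j\ge d$ (where one subtracts $n$), and then checks that the resulting values $kd-m$ are all distinct. Your argument is cleaner: multiplying by the unit $d-1$ modulo $n$ and using $d(d-1)\equiv -1$, $(d-1)^2\equiv -d$ turns $f$ into the transparent linear form $(d-1)-(i+jd)$, after which the values $i+jd$ visibly sweep out $n-1$ consecutive integers and the conclusion is immediate. Both arguments are elementary, but yours avoids the case split and makes the bijectivity essentially self-evident once the change of variable is spotted.
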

\begin{proof}
We first show that $\frac{q^n-1}{q-1}=1+q(\frac{q^{d^2}-1}{q^d-1})(\frac{q^{(d-1)d}-q^{d-1}}{q^{d-1}-1}) \Mod{q^n-1}$. Recall that $\frac{q^n-1}{q-1}=\sum_{i=0}^{n-1}q^i=1+q+q^2+\dots+q^{n-1}$ and $n=d^2-d+1$. Then 
\begin{align*}
1+q(\frac{q^{d^2}-1}{q^d-1})(\frac{q^{(d-1)d}-q^{d-1}}{q^{d-1}-1})&=1+q(\sum_{i=0}^{d-1}q^{id})(\sum_{j=1}^{d-1}q^{j(d-1)})\\
&=1+\sum_{i=0}^{d-1}\sum_{j=1}^{d-1}q^{id+j(d-1)+1}.
\end{align*} 
We claim that $id+j(d-1)+1 \Mod{n}$ with $i=0,\dots,d-1$ and $j=1,\dots,d-1$ gives us exactly the numbers 
$\{1,\dots,n-1\}$. Assuming the truth of this claim, $1+q(\frac{q^{d^2}-1}{q^d-1})(\frac{q^{(d-1)d}-q^{d-1}}{q^{d-1}-1}) \Mod{q^n-1}=1+q+q^2+\dots+q^{n-1}=\frac{q^n-1}{q-1}$. Since $\frac{q^n-1}{q-1}$ divides $q^n-1$, $\frac{q^n-1}{q-1}$ divides $1+q(\frac{q^{d^2}-1}{q^d-1})(\frac{q^{(d-1)d}-q^{d-1}}{q^{d-1}-1})$ and thus $\gcd(1+q(\frac{q^{d^2}-1}{q^d-1})(\frac{q^{(d-1)d}-q^{d-1}}{q^{d-1}-1}),\frac{q^n-1}{q-1})=\frac{q^n-1}{q-1}$ and the result is proved.

It remains to prove the claim. To see this, we will show that the sets $$\{(i+j)d-(j-1)\mid i=0,\dots,d-1; j=1,\dots,d-1\}$$ and $$\{kd-m\mid m=0,\dots,d-2; k=m+1,\dots,m+d\}$$ are equal, and it is easy to see that all values in the second set are distinct.

Fixing $j$ and varying $i=0,\dots,d-1$ gives us the numbers $$jd-(j-1),(j+1)d-(j-1),\dots,(j+d-1)d-(j-1).$$
When $i+j\leq d-1$ then $(i+j)d-(j-1)\leq n$ and all these numbers are of the form $kd-m$ where $m\in\{0,\dots,d-2\}$ and $m<k\leq d-1$.\\
When $i+j\geq d$, then $(i+j)d-(j-1)>n$ and we subtract $n$ to get $(i+j-d+1)d-j$. Now $i+j-d+1\leq j$ since $i\leq d-1$, and thus $(i+j-d+1)d-j$ is not of the above form $kd-m$ with $m<k\leq d-1$.
\end{proof}

\begin{Corollary}
Let $n=(d-1)d+1$ and let $e_1=\frac{q^{d^2}-1}{q^d-1}$ and $e_2=\frac{q^{(d-1)d}-q^{d-1}}{q^{d-1}-1}$. Then  
\[
\gcd(x^{1+qe_1e_2}+(-1)^d,x^{\frac{q^n-1}{q-1}}+(-1)^d)=x^{\frac{q^n-1}{q-1}}+(-1)^d.
\]
\end{Corollary}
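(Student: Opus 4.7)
The plan is to reduce the corollary to the single divisibility $x^\ell+(-1)^d \mid x^k+(-1)^d$, where I write $\ell=\frac{q^n-1}{q-1}$ and $k=1+qe_1e_2$; the reverse divisibility is trivial, so this pins down the GCD as $x^\ell+(-1)^d$. Lemma \ref{expos} already supplies $\gcd(k,\ell)=\ell$, i.e., $\ell\mid k$, so the right-hand side at least has the expected shape $x^{\gcd(k,\ell)}+(-1)^d$.

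First I would handle the case in which $(-1)^d=-1$ holds in $\mathbb{F}_{q^n}$. This covers both $d$ odd (in any characteristic) and $\mathrm{char}\,\mathbb{F}_{q^n}=2$ (where $+1=-1$ anyway). In this situation the claim reduces to $\gcd(x^k-1,x^\ell-1)=x^{\gcd(k,\ell)}-1=x^\ell-1$, which is the classical GCD identity combined with Lemma \ref{expos}.

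Second, I would treat the remaining case: $d$ even and $q$ odd, so that $(-1)^d=1$. Here I would apply Theorem \ref{gcd} directly. Since $\gcd(k,\ell)=\ell$, the ratio $\ell/\gcd(k,\ell)=1$ is automatically odd, so Theorem \ref{gcd} yields $\gcd(x^k+1,x^\ell+1)=x^\ell+1$ provided $k/\ell$ is also odd. To check this, I would revisit the proof of Lemma \ref{expos}, which in fact establishes the stronger congruence $k\equiv\ell\pmod{q^n-1}$. Writing $k-\ell=m(q^n-1)=m\ell(q-1)$ for some integer $m\ge 0$ gives $k/\ell=1+m(q-1)$, and since $q-1$ is even in this case, $k/\ell$ is odd, as required.

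The only mild obstacle is noticing that the proof of Lemma \ref{expos} actually yields the sharper congruence $k\equiv\ell\pmod{q^n-1}$, rather than merely the divisibility $\ell\mid k$; that finer fact is precisely what controls the parity of $k/\ell$ in the $d$ even, $q$ odd case. Everything else is a direct invocation of Theorem \ref{gcd} or the classical identity $\gcd(x^k-1,x^\ell-1)=x^{\gcd(k,\ell)}-1$.
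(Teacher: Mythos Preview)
Your proposal is correct, and the overall structure---reduce to divisibility, then invoke Theorem \ref{gcd} and Lemma \ref{expos}---matches the paper's. The one genuine difference is how the oddness condition in Theorem \ref{gcd} is verified in the critical case $d$ even, $q$ odd. The paper argues directly that both exponents $k=1+qe_1e_2$ and $\ell=\frac{q^n-1}{q-1}$ are individually odd, by a case analysis on the parities of $e_1$ and $e_2$ (reusing the parity of $e_2$ from the proof of Theorem \ref{direction1} and computing that of $e_1$ afresh); since $\ell\mid k$, this forces $k/\ell$ odd. You instead exploit the sharper congruence $k\equiv\ell\pmod{q^n-1}$ established at the start of the proof of Lemma \ref{expos}, which immediately gives $k/\ell=1+m(q-1)$ and hence oddness when $q$ is odd. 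Your route is a bit cleaner---it sidesteps the separate parity bookkeeping for $e_1$ and $e_2$---at the cost of relying on an intermediate step inside the proof of Lemma \ref{expos} rather than just its statement. Your handling of characteristic $2$ by folding it into the $(-1)^d=-1$ case is also tidier than the paper's, which absorbs $q$ even into the parity analysis.
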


\begin{proof}
If $q$ is even, then both $1+q e_1 e_2$ and $\frac{q^n-1}{q-1}$ are odd. Recall that $\frac{q^n-1}{q-1}=\sum_{i=0}^{n-1}q^i$. So if $q$ is odd, then $\frac{q^n-1}{q-1}$ is odd if $n$ is odd, and even if $n$ is even. But $n=(d-1)d+1$ is always odd, so $\frac{q^n-1}{q-1}$ is odd. We have already established in the proof of Theorem \ref{direction1} that if $q$ is odd, then $e_2$ is odd if $d-1$ is odd, and even if $d-1$ is even.
Now $e_1=\sum_{i=0}^{d-1}q^{id}$ is odd if $q$ and $d$ are odd and even if $q$ is odd but $d$ is even. But either $d$ or $d-1$ is always even, so $e_1e_2$ is even. Thus $1+q e_1 e_2$ is odd. Consequently, by Theorem \ref{gcd} $$\gcd(x^{1+q e_1 e_2}+(-1)^d,x^{\frac{q^n-1}{q-1}}+(-1)^d)=x^{\gcd(1+q e_1 e_2,\frac{q^n-1}{q-1})}+(-1)^d$$ for any $q,d$.
\end{proof}

\begin{Corollary}\label{direction1_2}
In the conclusions of Theorem \ref{direction1}, conclusion 1 implies conclusion 2.
\end{Corollary}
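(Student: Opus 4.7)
The plan is to rephrase both conclusions as polynomial root conditions and then invoke the immediately preceding corollary, which is already tailored for exactly this purpose.

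First I would rewrite conclusion 1 of Theorem \ref{direction1}, namely $a^{1+q+\dots+q^{(d-1)d}}=(-1)^{d-1}$, by noting that $1+q+\dots+q^{(d-1)d}=\frac{q^n-1}{q-1}$ (since $n=(d-1)d+1$) and that $(-1)^{d-1}=-(-1)^d$. So conclusion 1 is equivalent to the statement that $a$ is a root of the polynomial $x^{(q^n-1)/(q-1)}+(-1)^d$ in $\mathbb{F}_{q^n}[x]$. By the same sign manipulation, conclusion 2, $a^{1+qe_1e_2}=(-1)^{d-1}$, is equivalent to $a$ being a root of $x^{1+qe_1e_2}+(-1)^d$.

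Next I would apply the corollary just proved, which asserts
\[
\gcd\bigl(x^{1+qe_1e_2}+(-1)^d,\ x^{(q^n-1)/(q-1)}+(-1)^d\bigr)=x^{(q^n-1)/(q-1)}+(-1)^d.
\]
This is precisely the statement that $x^{(q^n-1)/(q-1)}+(-1)^d$ divides $x^{1+qe_1e_2}+(-1)^d$ in $\mathbb{F}_q[x]$. Hence every root of the former (in any extension of $\mathbb{F}_q$, in particular in $\mathbb{F}_{q^n}$) is also a root of the latter. Specializing at $a$ yields conclusion 2 from conclusion 1.

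Since the heavy lifting, namely the explicit computation of the GCD and the verification that $\frac{q^n-1}{q-1}$ divides $1+qe_1e_2$, was already carried out in Lemma \ref{expos} and the preceding corollary, the proof is essentially a bookkeeping exercise. The only point requiring care is keeping track of the sign: because the equalities in Theorem \ref{direction1} involve $(-1)^{d-1}$ while the polynomials in the corollary are written with $(-1)^d$, one must consistently move the sign to the other side before passing between the two formulations. There is no real obstacle beyond this translation.
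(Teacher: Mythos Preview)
Your proposal is correct and matches the paper's intended argument: the paper states Corollary~\ref{direction1_2} without an explicit proof, precisely because the preceding corollary on the gcd was designed so that the implication follows immediately by the translation you describe. Your handling of the sign $(-1)^{d-1}=-(-1)^d$ is exactly the bookkeeping needed to link Theorem~\ref{direction1}'s conclusions to the polynomials appearing in that gcd statement.
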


\section{The Main Result}\label{converse}

In this section, we will prove the third part of the main theorem as stated in the introduction.
The following Lemma is surely well known but we include a short proof.

\begin{Lemma}\label{binomodp}
$\binom{n}{i}=0 \Mod{p}$ for all $i=1,2,\dots,n-1$ if and only if $n$ is a power of $p$.
\end{Lemma}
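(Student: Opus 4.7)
The plan is to recast the condition on binomial coefficients as a polynomial identity over $\mathbb{F}_p$ and exploit the Frobenius endomorphism. Specifically, the statement
\[
\binom{n}{i} \equiv 0 \pmod p \quad \text{for all } 1 \le i \le n-1
\]
is equivalent, by the binomial theorem, to the identity
\[
(1+x)^n \equiv 1 + x^n \pmod p
\]
in $\mathbb{F}_p[x]$. I would open the proof by stating this equivalence, since it cleanly separates the two directions.

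For the forward implication, I would write $n = p^k$ and prove the identity by induction on $k$ (or equivalently by iterated Frobenius): since raising to the $p$-th power is a ring homomorphism on $\mathbb{F}_p[x]$, we have $(1+x)^p \equiv 1 + x^p \pmod p$, and applying this $k$ times gives $(1+x)^{p^k} \equiv 1 + x^{p^k} \pmod p$. This is a one-line argument once the reformulation is in place.

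For the converse, I would write $n = p^k m$ with $\gcd(m,p) = 1$ and assume $(1+x)^n \equiv 1 + x^n \pmod p$. Using Frobenius again, $(1+x)^n \equiv (1+x^{p^k})^m \pmod p$, so expanding with the binomial theorem yields
\[
(1+x^{p^k})^m = \sum_{j=0}^{m} \binom{m}{j} x^{j p^k} \equiv 1 + x^{m p^k} \pmod p.
\]
Comparing coefficients forces $\binom{m}{j} \equiv 0 \pmod p$ for $1 \le j \le m-1$; in particular, taking $j=1$ gives $m \equiv 0 \pmod p$. Combined with $\gcd(m,p) = 1$, this forces $m = 1$, hence $n = p^k$.

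The only mild obstacle is remembering that the coefficient comparison only produces useful information when $m \ge 2$ (the case $m=1$ is already the desired conclusion), so I would phrase the final step as a dichotomy: either $m=1$ and we are done, or $m \ge 2$ and the coefficient $\binom{m}{1} = m$ must vanish mod $p$, contradicting $\gcd(m,p)=1$. Everything else is mechanical.
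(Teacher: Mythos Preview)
Your proof is correct and takes a genuinely different route from the paper's. The paper argues the converse by writing $n = p^k w$ with $p \nmid w$ and $w > 1$, and then exhibits a specific binomial coefficient that survives modulo $p$: it shows directly that $\binom{p^k w}{p^k}$ is not divisible by $p$, via the elementary factorization
\[
\binom{p^k w}{p^k} = w \prod_{i=1}^{p^k - 1} \frac{p^k w - i}{i},
\]
checking that each factor contributes no net power of $p$. Your argument instead passes through the polynomial identity $(1+x)^n \equiv 1 + x^n$ in $\mathbb{F}_p[x]$ and uses Frobenius to reduce to $(1+x^{p^k})^m \equiv 1 + x^{m p^k}$, reading off $m \equiv 0 \pmod p$ from the $x^{p^k}$ coefficient. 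Your approach is cleaner and more conceptual, and makes the forward direction a genuine one-liner; the paper's approach is more hands-on but has the minor advantage of pinpointing an explicit nonvanishing coefficient (namely $\binom{n}{p^k}$), which is a shadow of Lucas' theorem. Either is perfectly adequate for this well-known lemma.
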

\begin{proof}
If $n$ is a power of $p$, then the above binomial coefficients are divisible by $p$. On the other hand, we claim that if $n=p^kw$, where $p\nmid w$, $w>1$ and $k\geq0$, then $\binom{p^kw}{p^k}$ is not divisible by $p$. First note that $\binom{n}{m}=\frac{n!}{m!(n-m)!}=\frac{\prod_{i=1}^n i}{(\prod_{i=1}^m i)(\prod_{i=1}^{n-m}i)}=\frac{\prod_{i=n-m+1}^n i}{\prod_{i=1}^m i}=\frac{\prod_{i=0}^{m-1}(n-i)}{\prod_{i=1}^m i}=\frac{n}{m}\prod_{i=1}^{m-1}\frac{n-i}{i}$. Thus $\binom{p^kw}{p^k}=w\prod_{i=1}^{p^k-1}\frac{p^kw-i}{i}$. Now write $i=lp^j$ with $p\nmid l$. Then $\frac{p^kw-i}{i}=\frac{p^kw-lp^j}{lp^j}=\frac{(p^{k-j}w-l)p^j}{lp^j}=\frac{p^{k-j}w-l}{l}$ which is not divisible by $p$.
\end{proof}

Finally, we present  the last part of the proof of the main theorem.

\begin{Theorem}\label{gen_ab}
Let $n=(d-1)d+1$ and $e_1=\frac{q^{d^2}-1}{q^d-1}$. Let $L=x^{q^d}-bx^q-ax\in\mathbb{F}_{q^n}[x]$. Then $L$ has $q^d$ roots in $\mathbb{F}_{q^n}$ if and only if each of the following holds:
\begin{enumerate}
\item $a^{1+q+\dots+q^{(d-1)d}}=(-1)^{d-1}$
\item $b=-a^{qe_1}$
\item $d-1$ is a power of the characteristic of $\mathbb{F}_{q^n}$.
\end{enumerate}
\end{Theorem}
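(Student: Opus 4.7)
The plan is to split the argument into a forward direction (refining Theorem \ref{direction1}) and a converse direction (direct verification using \cite[Corollary 3.2]{CMPZu}). If $L$ splits completely, Theorem \ref{direction1} together with Corollary \ref{direction1_2} already delivers conditions 1 and 2, so what remains in the forward direction is to deduce condition 3. The untapped information is the rest of the system \eqref{system1}: for each pair $(l,j)\ne(1,1),(1,d)$, the entry $M_{l,(d-2)d+j+1}$ of $A_{L,n}$ must equal $\delta_{lj}$. I would compute an appropriate further entry (for instance $M_{1,(d-2)d+3}$) in closed form using \eqref{allrecursion} and then substitute $b=-a^{qe_1}$. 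What should emerge is an expression in $a$ whose coefficients are, up to signs and $q$-Frobenius twists, binomial coefficients $\binom{d-1}{i}$. Setting the entry equal to its required value and normalising with $a^{(q^n-1)/(q-1)}=(-1)^{d-1}$ then forces $\binom{d-1}{i}\equiv 0 \Mod{p}$ for all $1\le i\le d-2$, so by Lemma \ref{binomodp}, $d-1$ is a power of the characteristic.

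For the converse direction, I would assume conditions 1--3 and aim to show $A_{L,n}=I_d$. By \cite[Corollary 3.2]{CMPZu} it suffices to check a single column, say the first, i.e.\ to verify $M_{l,(d-2)d+2}=\delta_{l,1}$ for $l=1,\dots,d$. I would iterate \eqref{allrecursion} with $b=-a^{qe_1}$ substituted in and expand; because $d-1$ is a power of $p$, Lemma \ref{binomodp} collapses the resulting multinomial into a simple sum of its two extreme terms (the ``pure $a$'' and ``pure $-a^{qe_1}$'' contributions). The normalisation $a^{(q^n-1)/(q-1)}=(-1)^{d-1}$ then converts these extreme terms into exactly $\delta_{l,1}$, as required.

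The main obstacle I expect is the closed-form calculation of a generic entry $M_{l,k}$. The recursion \eqref{allrecursion} unfolds $M_{l,k}$ into a sum of monomials $a^{\alpha} b^{\beta}$, and after substituting $b=-a^{qe_1}$ these collapse into a weighted sum of powers of $a$. Tracking the exponents of $q$ modulo $n$ so that equal powers of $a$ can be grouped and recognised as the binomial coefficients $\binom{d-1}{i}$ is the delicate step; the index combinatorics are similar in spirit to those already handled in Lemma \ref{expos}. Once this identification is made, Lemma \ref{binomodp} ties the splitting of $L$ directly to the characteristic condition, closing both directions of the theorem.
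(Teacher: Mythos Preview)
Your plan is essentially the paper's own argument: reduce via \cite[Corollary 3.2]{CMPZu} to the first column of $A_{L,n}$, unroll the recursion \eqref{allrecursion} so that Pascal's triangle appears (the key identity being $a^{q^{k-1}}b^{q^{k-d-1}}=a^{q^{k-d}}b^{q^{k-1}}$ once $b=-a^{qe_1}$), and then invoke Lemma \ref{binomodp}.

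One correction to your forward direction: a \emph{single} extra entry such as $M_{1,(d-2)d+3}$ does not deliver all the relations $\binom{d-1}{i}\equiv 0\Mod p$. After the $(d-1)$-fold unrolling and the substitution $b=-a^{qe_1}$, each surviving term is (up to sign) a single power of $a$, so one entry yields only one binomial coefficient; for instance $M_{1,(d-2)d+3}$ gives just $(d-1)\cdot(\text{power of }a)$, hence only $p\mid d-1$. The paper instead computes the whole first column $M_{l,(d-2)d+2}$ for $l=1,\dots,d$ and obtains $M_{l,(d-2)d+2}=\binom{d-1}{l-2}\,c_{d-1,l-2}$ for $l\ge 3$; vanishing of all of these then forces every $\binom{d-1}{i}$ with $1\le i\le d-2$ to vanish, and Lemma \ref{binomodp} applies. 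With this adjustment your outline coincides with the paper's proof.
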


\begin{proof}
Recall that the $(l,1)$ entry of $A_{L,n}$ is $M_{l,n-d+1}$. We will first show that 
\[
M_{l,n-d+1}=\begin{cases}
1 \text{ for } l=1,\\
0 \text{ for } l=2,\dots,d
\end{cases}
\] 
whenever the three conditions of the theorem are fulfilled. By \cite[Corollary 3.2]{CMPZu}, this implies that $A_{L,n}=I_d$.\\

Let $k\geq d+1$. By the  recursion \eqref{allrecursion}, 
\begin{align}
M_{l,k}&=M_{l,k-d}a^{q^{k-1}}+M_{l,k-d+1}b^{q^{k-1}}\nonumber \\
&=(M_{l,k-2d}a^{q^{k-d-1}}+M_{l,k-2d+1}b^{q^{k-d-1}})a^{q^{k-1}}+\nonumber \\
&\qquad (M_{l,k-2d+1}a^{q^{k-d}}+M_{l,k-2d+2}b^{q^{k-d}})b^{q^{k-1}}\nonumber \\
&=M_{l,k-2d}a^{q^{k-d-1}+q^{k-1}}+M_{l,k-2d+1}(a^{q^{k-1}}b^{q^{k-d-1}}+a^{q^{k-d}}b^{q^{k-1}})
\nonumber \\
&\qquad +M_{l,k-2d+2}b^{q^{k-d}+q^{k-1}}.\label{hhh}
\end{align}
Since $b=-a^{qe_1}=-a^{1+\sum_{i=0}^{d-2} q^{id+1}} \Mod{a^{q^n}-a}$ (condition 2 in the statement of the theorem) we have
\begin{align*}
a^{q^{k-1}}b^{q^{k-d-1}}
&=-a^{q^{k-1}+q^{k-d-1}(1+\sum_{i=0}^{d-2} q^{id+1})}\\
&=-a^{q^{k-1}+q^{k-d}+q^{k-d-1}+\sum_{i=1}^{d-2} q^{k+(i-1)d}}\\
&=-a^{q^{k-1}+q^{k-d}+q^{k-d-1}+\sum_{i=0}^{d-3} q^{k+id}}\\
&=-a^{q^{k-1}+q^{k-d}+q^{k+(d-2)d}+\sum_{i=0}^{d-3} q^{k+id}} \Mod{a^{q^n}-a}\\
&=-a^{q^{k-d}+q^{k-1}(1+\sum_{i=0}^{d-2} q^{id+1})}\\
&=a^{q^{k-d}}b^{q^{k-1}}.
\end{align*}

So the coefficient of $M_{l,k-2d+1}$ in \eqref{hhh} that comes from expanding $M_{l,k-d}$ is the same as the coefficient that comes from expanding $M_{l,k-d+1}$.

\bigskip
\begin{center}
\Tree [.$M_{l,k}$  [.$M_{l,k-d}$  [.$M_{l,k-2d}$  ]  $M_{l,k-2d+1}$ ][.$M_{l,k-d+1}$  $M_{l,k-2d+1}$ $M_{l,k-2d+2}$ ] ]
\end{center}
\bigskip

Let $c_{2,0}=a^{q^{k-1}+q^{k-d-1}}$, $c_{2,1}=a^{q^{k-d}}b^{q^{k-1}}$, and $c_{2,2}=b^{q^{k-1}+q^{k-d}}$. Thus \eqref{hhh} is saying that $$M_{l,k}=c_{2,0}M_{l,k-2d}+2c_{2,1}M_{l,k-2d+1}+c_{2,2}M_{l,k-2d+2}.$$ One can see Pascal's triangle emerging. We claim that $$M_{l,k}=\sum_{i=0}^{j} \binom{j}{i} c_{j,i}M_{l,k-jd+i}$$ for all $j=0,\dots,\lfloor\frac{k-1}{d}+1\rfloor$, where $c_{j,i}$ are expressions in $a$ and $b$, determined by the following recursion: 
\[
c_{j,i} = 
\begin{cases}
1 & \text{for } j=i=0, \\
c_{j-1,0}a^{q^{k-(j-1)d-1}} & \text{for } i=0, \\
c_{j-1,i}a^{q^{k-(j-1)d+i-1}}=c_{j-1,i-1}b^{q^{k-(j-1)d+i-2}} & \text{for } 0<i<j,\\
c_{j-1,j-1}b^{q^{k-(j-1)d+j-2}}  & \text{for } i=j.
\end{cases}
\]

We have shown the statement for $j=2$. Assume that the statement is true for any index less than $j$. Then
\begin{align*}
M_{l,k}&=\sum_{i=0}^{j-1} \binom{j-1}{i} c_{j-1,i}M_{l,k-(j-1)d+i}\\
&=\sum_{i=0}^{j-1} \binom{j-1}{i} c_{j-1,i}(M_{l,k-jd+i}a^{q^{k-(j-1)d+i-1}}+M_{l,k-jd+i+1}b^{q^{k-(j-1)d+i-1}})\\
&=\binom{j-1}{0}c_{j-1,0}a^{q^{k-(j-1)d-1}}M_{l,k-jd}+\binom{j-1}{j-1}c_{j-1,j-1}b^{q^{k-(j-1)d+j-2}}M_{l,k-jd+j}\\
&+\sum_{i=1}^{j-1}M_{l,k-jd+i}\left(\binom{j-1}{i-1}c_{j-1,i-1}b^{q^{k-(j-1)d+i-2}}+\binom{j-1}{i}c_{j-1,i}a^{q^{k-(j-1)d+i-1}}\right).
\end{align*}
Let $m=k-(j-2)d+i-1$. Then $a^{q^{m-1}}b^{q^{m-d-1}}=a^{q^{m-d}}b^{q^{m-1}}$, i.e.
\begin{align*}
a^{q^{k-(j-2)d+i-2}}b^{q^{k-(j-1)d+i-2}}=a^{q^{k-(j-1)d+i-1}}b^{q^{k-(j-2)d+i-2}}
\end{align*}
and hence
\begin{align}\label{c_m}
c_{j-2,i-1}a^{q^{k-(j-2)d+i-2}}b^{q^{k-(j-1)d+i-2}}=c_{j-2,i-1}a^{q^{k-(j-1)d+i-1}}b^{q^{k-(j-2)d+i-2}}.
\end{align}
Then 
$$c_{j-1,i-1}b^{q^{k-(j-1)d+i-2}}=c_{j-2,i-1}a^{q^{k-(j-2)d+i-2}}b^{q^{k-(j-1)d+i-2}}$$ and $$c_{j-1,i}a^{q^{k-(j-1)d+i-1}}=c_{j-2,i-1}b^{q^{k-(j-2)d+i-2}}a^{q^{k-(j-1)d+i-1}}$$ and by \eqref{c_m}, these two expressions are equal.\\
Thus
\begin{align*}
M_{l,k}&=\binom{j}{0}c_{j-1,0}a^{q^{k-(j-1)d-1}}M_{l,k-jd}+\binom{j}{j}c_{j-1,j-1}b^{q^{k-(j-1)d+j-2}}M_{l,k-jd+j}\\
&+\sum_{i=1}^{j-1}\binom{j}{i}c_{j-1,i}a^{q^{k-(j-1)d+i-1}}M_{l,k-jd+i}
\end{align*} as desired. This completes the proof of the claim.

We now have
\begin{align*}
M_{l,n-d+1}&=M_{l,(d-2)d+2}=\sum_{i=0}^{d-1} \binom{d-1}{i} c_{d-1,i}M_{l,2-d+i}\\
&=\sum_{i=0}^{d-2} \binom{d-1}{i} c_{d-1,i}M_{l,2-d+i}+c_{d-1,d-1}M_{l,1}\\
&=\begin{cases}
c_{d-1,d-1}M_{l,1} & \text{for } l=1, \\
\binom{d-1}{l-2} c_{d-1,l-2}M_{l,l-d}+c_{d-1,d-1}M_{l,1} & \text{for } l\geq2\\
\end{cases}
\end{align*}
since $M_{l,2-d+i}=0$ when $i\neq l-2$.

As before, let $e_1=\frac{q^{d^2}-1}{q^d-1}$ and $e_2=\frac{q^{(d-1)d}-q^{d-1}}{q^{d-1}-1}$.

Then
\begin{align*}
c_{d-1,d-1}&=b^{q^{k-1}+q^{k-d}+q^{k-2d+1}+\dots+q^{k-(d-2)d+d-3}}\\
&=b^{q^{(d-2)d+1}+q^{(d-3)d+2}+\dots+q^{d-1}}\text{ for }k=(d-2)d+2\\
&=b^{q^{d-1}+q^{2d-2}+\dots+q^{(d-1)(d-1)}}\\
&=b^{e_2}\\ &=(-a^{qe_1})^{e_2}\\ &=(-1)^{d-1} a^{qe_1e_2}\\
&=(-1)^{d-1} a^{\frac{q^n-1}{q-1}-1} \Mod{a^{q^n}-a} \text{ by Lemma \ref{expos}}.
\end{align*}
Also
\begin{align*}
c_{d-1,0}&=a^{q^{k-1}+q^{k-d-1}+\dots+q^{k-(d-2)d-1}}\\
&=a^{q^{(d-2)d+1}+q^{(d-3)d+1}+\dots+q} \text{ for }k=(d-2)d+2.
\end{align*}
Thus \begin{align*}
c_{d-1,d-1}M_{l,1}&=(-1)^{d-1} a^{\frac{q^n-1}{q-1}-1}(aM_{l,1-d}+bM_{l,2-d})\\
&=(-1)^{d-1} a^{\frac{q^n-1}{q-1}}M_{l,1-d}+(-1)^{d} a^{\frac{q^n-1}{q-1}+\sum_{i=0}^{d-2} q^{id+1}}M_{l,2-d}\\
&=(-1)^{d-1} (-1)^{d-1}M_{l,1-d}+(-1)^{d} (-1)^{d-1}c_{d-1,0}M_{l,2-d}\\
&=M_{l,1-d}-c_{d-1,0}M_{l,2-d}
\end{align*}
since $a^{\frac{q^n-1}{q-1}}=(-1)^{d-1}$ (condition 1 in the statement of the theorem).

Hence, 
\begin{align}
M_{l,n-d+1}&=\begin{cases}
M_{l,1-d}-c_{d-1,0}M_{l,2-d} & \text{for } l=1, \\
\binom{d-1}{l-2} c_{d-1,l-2}M_{l,l-d}+M_{l,1-d}-c_{d-1,0}M_{l,2-d} & \text{for } l\geq2\\
\end{cases}\nonumber \\
&=
\begin{cases}
1 & \text{for } l=1, \\
0 & \text{for } l=2, \\
\binom{d-1}{l-2} c_{d-1,l-2} & \text{for } l\geq3  \label{jjj}
\end{cases}
\end{align} since $M_{l,l-d}=1$ and $M_{l,k}=0$ when $k\neq l-d$ and $k\leq0$.

So far we have only used conditions 1 and 2 in the statement of the theorem (so note for later that conditions 1 and 2 imply \eqref{jjj}). Assume now that condition 3 holds. By Lemma \ref{binomodp}, $M_{l,n-d+1}=0$ for all $l\geq3$ because $\binom{d-1}{l-2} =0$. This completes the proof that if the three conditions in the statement hold, then $L$ splits completely.

Now we complete the proof of the theorem by showing the converse, i.e. we show that if $L$ splits completely then the three conditions in the statement hold. Theorem \ref{direction1} and Corollary \ref{direction1_2} show that if $L$ splits completely, then conditions 1 and 2 of the theorem hold. Because conditions 1 and 2 hold, we know that \eqref{jjj} holds.

On the other hand, since $L$ splits completely, $M_{l,n-d+1}=0$ for all $l\geq3$. Therefore $\binom{d-1}{l-2} c_{d-1,l-2}=0$ for all $3 \leq l\leq d$. We now use the fact that $c_{d-1,l-2}$ is a power of $a$, and is therefore nonzero because $a$ is nonzero. We are forced to conclude that  $\binom{d-1}{l-2} =0$ for all $3 \leq l\leq d$. This implies that $d-1$ is a power of the characteristic of $\mathbb{F}_{q^n}$ by Lemma \ref{binomodp}.
\end{proof}

\section{Possible Application to Cryptography}\label{crypt}

\subsection{Quasi-Subfield Polynomials}

The recent work \cite{Quasi} explored the use of quasi-subfield polynomials to solve the Elliptic Curve Discrete Logarithm Problem (ECDLP). They define quasi-subfield polynomials as polynomials of the form $x^{q^d}-\lambda(x)\in\mathbb{F}_{q^n}[x]$ which divide $x^{q^n}-x$ and where 
$\log_q(\deg(\lambda))<d^2/n$. For appropriate choices of $n$ and $d$, linearized polynomials 
have a chance of being quasi-subfield polynomials. We first observe that the polynomials in  Theorem \ref{gen_ab} are quasi-subfield polynomials.

\begin{Lemma}\label{linqs} 
The linearized polynomial $L=x^{q^d}-bx^q-ax\in\mathbb{F}_{q^{(d-1)d+1}}[x]$ is a quasi-subfield polynomial when all the following conditions are satisfied.
 \begin{enumerate}
\item $a^{1+q+\dots+q^{(d-1)d}}=(-1)^{d-1}$
\item $b=-a^{qe_1}$
\item $d-1$ is a power of the characteristic of $\mathbb{F}_{q^n}$.
\end{enumerate}
\end{Lemma}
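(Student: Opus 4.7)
The statement to prove is that under the three conditions of Theorem \ref{gen_ab}, the trinomial $L=x^{q^d}-bx^q-ax$ over $\mathbb{F}_{q^n}$ with $n=(d-1)d+1$ satisfies the two defining conditions of a quasi-subfield polynomial, namely (i) $L$ divides $x^{q^n}-x$, and (ii) the ``tail'' $\lambda(x)=bx^q+ax$ has $\log_q(\deg\lambda)<d^2/n$. The plan is to dispose of (i) by invoking Theorem \ref{gen_ab} directly, and to handle (ii) by a one-line arithmetic comparison; essentially all the real content lives in the already-proven Theorem \ref{gen_ab}.

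For (i), I would first observe that Theorem \ref{gen_ab} guarantees that under conditions 1--3, $L$ has exactly $q^d$ roots in $\mathbb{F}_{q^n}$. Since $\deg L=q^d$, this accounts for every root of $L$, and all of them lie in $\mathbb{F}_{q^n}$. It remains to rule out multiplicities, i.e.\ to see that $L$ is separable; but condition 1 forces $a\ne 0$, and the formal derivative of a linearized polynomial $L$ with constant term $-a$ is $L'(x)=-a$, a nonzero constant. Hence $L$ has $q^d$ distinct roots, each a root of $x^{q^n}-x$, and separability upgrades root-containment to divisibility: $L\mid x^{q^n}-x$.

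For (ii), I would observe that $b=-a^{qe_1}$ together with $a\ne 0$ forces $b\ne 0$, so $\deg\lambda=q$ and $\log_q(\deg\lambda)=1$. Then the required inequality reduces to
\[
1<\frac{d^2}{d^2-d+1},
\]
which is equivalent to $d^2-d+1<d^2$, i.e.\ $d>1$. Since we are in the regime $d\ge 2$, this is immediate.

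There is no real obstacle: the hard work (the characterization of complete splitting) has already been done in Theorem \ref{gen_ab}, and the degree bound is a numerical tautology for $d\ge 2$. The only thing to be mildly careful about is the separability argument in step (i), which relies crucially on $a\ne 0$ (so that the constant term of $L$ is nonzero), a fact that is automatic from condition~1.
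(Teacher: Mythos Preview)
Your proof is correct and follows essentially the same route as the paper: invoke Theorem~\ref{gen_ab} for the divisibility $L\mid x^{q^n}-x$, and check the numerical inequality $\log_q(\deg\lambda)=1<d^2/n$ via $d^2>d^2-d+1$. The only difference is cosmetic---you spell out the separability argument explicitly, whereas the paper treats ``has $q^d$ roots in $\mathbb{F}_{q^n}$'' as synonymous with splitting completely (as it is for linearized polynomials with nonzero constant term).
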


\begin{proof}
Here, $\log_q(\deg(\lambda))=1$ and $d^2>n=(d-1)d+1$ so the condition
$\log_q(\deg(\lambda))<d^2/n$ is satisfied.
By Theorem \ref{gen_ab}, $L(x)$ divides $x^{q^n}-x$.
\end{proof}

\subsection{The ECDLP}

Let $E$ be an elliptic curve over a finite field $\mathbb{F}_q$, where $q$ is a prime power. In practice, $q$ is often a prime number or a large power of 2. Let $P$ and $Q$ be 
$\mathbb{F}_q$-rational points on $E$. The Elliptic Curve Discrete Logarithm Problem (ECDLP) is finding an integer $l$ (if it exists) such that $Q=lP$. The integer $l$ is called the discrete logarithm of $Q$ to base $P$.

The ECDLP is a hard problem that underlies many cryptographic schemes and is thus an area of active research.  The introduction of summation polynomials by \cite{Semaev04} has led to algorithms that resemble the index calculus algorithm of the DLP over finite fields.

The algorithm to solve the ECDLP in \cite{Quasi} also uses summation polynomials, so we recall their definition.

\begin{Definition}\cite{Semaev04}
Let $E$ be an elliptic curve over a field $K$. For $m\geq 1$, we define the summation polynomial $S_{m+1}=S_{m+1}(X_0,X_1,\ldots,X_m)\in K[X_0,X_1,\ldots,X_m]$ of $E$ by the following property. Let $x_0,x_1,\ldots,x_m\in\overline{K}$, then $S_{m+1}(x_0,x_1,\ldots,x_m)=0$ if and only if there exist $y_0,y_1,\ldots,y_m\in\overline{K}$ such that $(x_i,y_i)\in E(\overline{K})$ and $(x_0,y_0)+(x_1,y_1)+\ldots+(x_m,y_m)=\mathcal{O}$, where $\mathcal{O}$ is the identity element of $E$.
\end{Definition}

The summation polynomials $S_m$ have many terms and have only been computed for $m\le 9$.

\cite{Quasi} develop an algorithm to solve the ECDLP over the field $\mathbb{F}_{q^n}$ using a quasi-subfield polynomial $X^{q^d}-\lambda(X)\in\mathbb{F}_{q^n}[X]$ and the summation polynomial $S_{m+1}(X_0,X_1,\ldots,X_m)\in\mathbb{F}_{q^n}[X_0,X_1,\ldots,X_m]$. By \cite[Theorem 3.2]{Quasi} (see also Appendix A1) their algorithm has complexity $$m!q^{n-d(m-1)}\tilde{O}(m^{5.188}2^{7.376m(m-1)}\deg(\lambda)^{4.876m(m-1)})+mq^{2d}.$$

\subsection{Linearized Quasi-Subfield Polynomials}

One of the problems outlined in \cite{Quasi} is to find suitable quasi-subfield polynomials that give optimal complexity in their algorithm. So in this section, we will investigate whether the linearized polynomials 
in this paper are a suitable choice. 

In our notation the field is  $\mathbb{F}_{q^n}$ so brute force algorithms have $O(q^n)$ complexity and generic algorithms (Pollard Rho or Baby-Step-Giant-Step) have $O(q^{n/2})$ complexity.

If $n=(d-1)d+1$ and we use $L=x^{q^d}-bx^q-ax\in\mathbb{F}_{q^{(d-1)d+1}}[x]$ 
as in Lemma \ref{linqs} as our quasi-subfield polynomial, then we get complexity 
$$m!q^{d^2-dm+1}\tilde{O}(m^{5.188}2^{7.376m(m-1)}q^{4.876m(m-1)})+mq^{2d}$$
for the algorithm in \cite{Quasi}. However, since $d^2-dm+1+4.876m(m-1)>n/2$ for any $d,m$, this will not beat generic discrete log algorithms. Thus it appears that the polynomials of  Theorem \ref{gen_ab} will not lead to an ECDLP algorithm that beats generic algorithms,
although they can beat brute force algorithms.

\begin{Remark}
We  briefly discuss adding another term of small degree, for example, an $x^{q^2}$ term. Suppose we have a linearized polynomial $L=x^{q^d}-cx^{q^2}-bx^q-ax\in\mathbb{F}_{q^n}[x]$ which splits completely and with $d^2>2n$ (so $L(x)$ is a quasi-subfield polynomial). Then the algorithm of \cite{Quasi} has complexity $$m!q^{n-d(m-1)}\tilde{O}(m^{5.188}2^{7.376m(m-1)}(q^2)^{4.876m(m-1)})+mq^{2d}.$$

To beat generic discrete log algorithms, we require at least $n-d(m-1)\leq n/2$ and $2d\leq n/2$, which implies $\frac{n}{2(m-1)}\leq d\leq\frac{n}{4}$ and therefore $m\geq3$.
As an example, if we choose $q=2$ and $m=4$ then we have 
$2^{7.376m(m-1)}(q^2)^{4.876m(m-1)}\approx1.45\cdot q^{205}$ inside the $\tilde{O}$. 
This means that the overall complexity can beat generic algorithms over $\mathbb{F}_{2^n}$ (for $n$ sufficiently large). For example, a choice of $d$ around $n/5$ when $q=2$, $m=4$, would give a complexity $O(q^{0.4n})$ for $n>500$. 

To obtain an estimate for smaller field sizes we may try $m=3$, which implies that $d\approx\frac{n}{4}$. These choices would give us complexity $$q^{n/2}\tilde{O}(3^{5.188}2^{44.256}q^{58.512})+3q^{n/2}$$ which is not better than generic algorithms. One example of a linearized polynomial which splits completely and matches these choices ($q=2$, $d\approx\frac{n}{4}$)  is $L=x^{1024}+x^{4}+x\in\mathbb{F}_{2^{42}}[x]$.
\end{Remark}

\section{Conclusion and open questions}

We have provided necessary and sufficient conditions for $L=x^{q^d}-bx^q-ax\in\mathbb{F}_{q^{(d-1)d+1}}[x]$
to have all $q^d$ roots in $\mathbb{F}_{q^{(d-1)d+1}}$.

The recursive formula that we found for trinomial linearized polynomials is valid for more general linearized polynomials too: Let $L=x^{q^d}-\sum_{i=0}^{d-1}a_i x^{q^i}$.

Set $M_{l,l-d}=1$, and $M_{l,k}=0$ for $k\leq0$ and $k\ne l-d$. For $1\leq l\leq d$ and $k\geq1$, let
\begin{equation}
 M_{l,k}=M_{l,k-d}a_0^{q^{k-1}}+M_{l,k-d+1}a_1^{q^{k-1}}+\cdots+M_{l,k-1}a_{d-1}^{q^{k-1}}
 =\sum_{i=0}^{d-1} M_{l,k-d+i}a_i^{q^{k-1}}.
\end{equation}
Then $M_{l,k}$ is the $(l,d)$ entry of $A_{L,k}$. Furthermore, the $(l,j)$ entry of $A_{L,k}$ is $M_{l,k-d+j}$.

We are currently working on extending these results to this more general case, for example, to polynomials of the form $L=x^{q^d}-cx^{q^2}-bx^q-ax$.

\begin{acknowledgement}
We thank Christophe Petit and John Sheekey for helpful conversations.
\end{acknowledgement}

\begin{funding}
This research was supported by a Postgraduate Government of Ireland Scholarship from the Irish Research Council.
\end{funding}

\bibliographystyle{alpha}
\bibliography{mybib}

\newcommand{\etalchar}[1]{$^{#1}$}
\begin{thebibliography}{HKP{\etalchar{+}}18}

\bibitem[CMPZ18]{CMPZh}
Bence Csajbok, Giuseppe Marino, Olga Polverino, and Yue Zhou.
\newblock Maximum rank-distance codes with maximum left and right idealisers,
  2018.

\bibitem[CMPZ19]{CMPZu}
Bence Csajbók, Giuseppe Marino, Olga Polverino, and Ferdinando Zullo.
\newblock A characterization of linearized polynomials with maximum kernel.
\newblock {\em Finite Fields and Their Applications}, 56:109 -- 130, 2019.

\bibitem[HKP{\etalchar{+}}18]{Quasi}
Ming-Deh~A. Huang, Michiel Kosters, Christophe Petit, Sze~Ling Yeo, and Yang
  Yun.
\newblock Quasi-subfield polynomials and the elliptic curve discrete logarithm
  problem.
\newblock MathCrypt 2018, 2018.

\bibitem[MS19]{MCGUIRE201968}
Gary McGuire and John Sheekey.
\newblock A characterization of the number of roots of linearized and
  projective polynomials in the field of coefficients.
\newblock {\em Finite Fields and Their Applications}, 57:68 -- 91, 2019.

\bibitem[Sem04]{Semaev04}
Igor Semaev.
\newblock Summation polynomials and the discrete logarithm problem on elliptic
  curves.
\newblock Cryptology ePrint Archive, Report 2004/031, 2004.
\newblock \url{http://eprint.iacr.org/2004/031}.

\bibitem[She19]{Sheekey}
John Sheekey.
\newblock Mrd codes: Constructions and connections.
\newblock arXiv, 2019.
\newblock \url{https://arxiv.org/abs/1904.05813}.

\end{thebibliography}
\end{document}